
\documentclass[12pt]{amsart}
\usepackage{amsfonts,latexsym,amsthm,amssymb,graphicx}
\usepackage{wrapfig}
\usepackage[all]{xy}
\usepackage{hyperref}

\usepackage[usenames]{color} 

\DeclareFontFamily{OT1}{rsfs}{}
\DeclareFontShape{OT1}{rsfs}{n}{it}{<-> rsfs10}{}
\DeclareMathAlphabet{\mathscr}{OT1}{rsfs}{n}{it}

\setlength{\textwidth}{6 in}
\setlength{\textheight}{8.75 in}
\setlength{\topmargin}{-0.25in}
\setlength{\oddsidemargin}{0.25in}
\setlength{\evensidemargin}{0.25in}

\CompileMatrices

\newtheorem{theorem}{Theorem}[section]
\newtheorem{lemma}[theorem]{Lemma}
\newtheorem{corol}[theorem]{Corollary}
\newtheorem{prop}[theorem]{Proposition}
\newtheorem{claim}[theorem]{Claim}

\newtheorem*{thmI}{Theorem I}
\newtheorem*{thmII}{Theorem II}
\newtheorem*{thmIII}{Theorem III}
\newtheorem*{thmIV}{Theorem IV}
\newtheorem*{scholium}{Scholium}

{\theoremstyle{definition} \newtheorem{defin}[theorem]{Definition}}
{\theoremstyle{remark} \newtheorem{remark}[theorem]{Remark}
\newtheorem{example}[theorem]{Example}}

\newcommand{\Abb}{{\mathbb{A}}}
\newcommand{\Cbb}{{\mathbb{C}}}
\newcommand{\Pbb}{{\mathbb{P}}}
\newcommand{\Zbb}{{\mathbb{Z}}}
\newcommand{\TV}{{\Til V}}
\newcommand{\cE}{{\mathscr E}}
\newcommand{\cI}{{\mathscr I}}
\newcommand{\cL}{{\mathscr L}}
\newcommand{\cO}{{\mathscr O}}
\newcommand{\Til}[1]{{\widetilde{#1}}}
\newcommand{\one}{1\hskip-3.5pt1}
\newcommand{\csm}{{c_{\operatorname{SM}}}}
\newcommand{\cFu}{{c_{\operatorname{F}}}}

\newcommand{\qede}{\hfill$\lrcorner$}
\newcommand{\shat}{{\hat s}}


\title{
Chern classes of splayed intersections
}
\author{Paolo Aluffi}
\author{Eleonore Faber}
\address{
Mathematics Department, 
Florida State University,
Tallahassee FL 32306, U.S.A.
}
\email{aluffi@math.fsu.edu}
\address{
Department of Computer and Mathematical Sciences,
University of Toronto at Scarborough,
Toronto, ON M1A 1C4,
Canada
}
\email{efaber@math.toronto.edu}

\begin{document}

\begin{abstract}
We generalize the Chern class relation for the transversal intersection of two nonsingular
varieties to a relation for possibly singular varieties, under a {\em splayedness\/} assumption. 
The relation is shown to hold for both the Chern--Schwartz--MacPherson class and the
Chern--Fulton class. The main tool is a formula for Segre classes of splayed subschemes.
We also discuss the Chern class relation under the assumption that one of the varieties
is a general very ample divisor.
\end{abstract}

\maketitle


\section{Introduction}
\subsection{}
Let $X$, $Y$ be nonsingular subvarieties of a nonsingular complex variety $V$. If $X$ and 
$Y$ intersect properly and transversally, then the intersection $X\cap Y$ is nonsingular,
and an elementary Chern class computation proves that
\begin{equation}\label{eq:transversal}
c(X) \cdot c(Y) = c(TV) \cap c(X\cap Y)\quad,
\end{equation}
where $c(X)$, etc.~denote the push-forward to $V$ of the total (homology) Chern class of the tangent
bundle of $X$, etc.,
and $\cdot$ is the intersection product in $V$. It is natural to ask whether \eqref{eq:transversal}
holds if 
$X$, $Y$, $X\cap Y$ are allowed to be singular. 
In~\cite{MR3106736},
\S3, we proposed the following generalization of~\eqref{eq:transversal}:

\begin{scholium}\label{main}
Let $X$, $Y$ be (possibly singular) subvarieties of a nonsingular variety~$V$. Assume
that $X$ and $Y$ are {\em splayed.\/} Then
\begin{equation}\label{eq:splayed}
\csm(X) \cdot \csm(Y) = c(TV) \cap \csm(X\cap Y)\quad.
\end{equation}
\end{scholium}

In~\eqref{eq:splayed}, $\csm(-)$~denotes the {\em Chern-Schwartz-MacPherson\/} 
class;
this is a natural generalization of the total Chern class to singular varieties, and we silently
push this class forward to the ambient variety $V$.
The $\csm$ class is defined for more general schemes---$X$ and $Y$ could be reducible, and 
should not be required to be pure dimensional. 
The purpose of this note is to investigate~\eqref{eq:splayed} at this level of generality.
For example, we will prove that the Scholium holds for arbitrary splayed hypersurfaces,
and more generally for subschemes satisfying a hypothesis of `strong' splayedness.
We also prove~\eqref{eq:splayed} for splayed subschemes for a different notion of
Chern class defined for arbitrary subschemes of a nonsingular variety.
Finally, we will discuss a `Bertini' statement, according to which~\eqref{eq:splayed}
holds if $X$ is a sufficiently general very ample divisor.

The notion of {\em splayedness\/} was
introduced and studied in the hypersurface case by the second author in~\cite{Faber12}, 
and it is explored further in~\cite{MR3106736}: $X$ and $Y$ are splayed if at each point 
$p$ of the intersection there exist analytic coordinates $(x_1,\dots,x_r,y_1,\dots, y_s)$ 
such that $X$ may be defined by an ideal generated by functions in the coordinates $x_i$ 
and $Y$ by an ideal generated by functions in the coordinates $y_j$. 
We also say that two {\em sets\/} $\{X_1, X_2,\dots\}$ and $\{Y_1,Y_2,\dots\}$ are splayed 
if there are local analytic splittings so that all the $X_i$ are defined in the first set of 
coordinates, and all the $Y_j$ are defined in the second set of coordinates. 
These notions generalize to possibly singular varieties and subschemes the notion of 
proper, transversal intersection of nonsingular varieties. \\

The reader who is not too familiar with characteristic classes may view~\eqref{eq:splayed}
as a very general form of identities involving the topological Euler characteristics of 
$X$, $Y$, $X\cap Y$. For example, let $X$ and $Y$ be splayed surfaces in $\Pbb^3$,
of degrees $d$, $e$ resp.; assume that the Euler characteristic of a general hyperplane
section of $X$, resp.~$Y$ is $a$, resp.~$b$. Then it may be checked that the 
Euler characteristic of the curve $X\cap Y$ is $ea+db-2de$. 
Similarly explicit formulas relate the Euler characteristics of general linear sections 
of $X$, $Y$, $X\cap Y$ if these are subsets of projective space and $X$, $Y$ are splayed
(cf.~\cite{MR3031565}). 
The Scholium reveals the underlying structure of all such identities, and generalizes them 
to splayed subsets in arbitrary nonsingular algebraic varieties.\\

Note that some transversality hypothesis is certainly needed
for \eqref{eq:splayed} to hold, as the following example shows.

\begin{example}
Let $X$ be a nonsingular quadric in $V=\Pbb^3$, and let $Y$ be a plane tangent to $X$.
Then $\csm(X)=2[\Pbb^2]+4[\Pbb^1]+4[\Pbb^0]$ and $\csm(Y)=[\Pbb^2]+3[\Pbb^1]
+3[\Pbb^0]$ (since $X$ and $Y$ are nonsingular, these are simply the push-forward
to $\Pbb^3$ of the total Chern classes of their tangent bundles). Thus, the left-hand
side of~\eqref{eq:splayed} is
\[
(2[\Pbb^2]+4[\Pbb^1]+4[\Pbb^0])\cdot ([\Pbb^2]+3[\Pbb^1]+3[\Pbb^0])
=2[\Pbb^1]+10 [\Pbb^0]\quad.
\]
On the other hand, denoting by $H$ the hyperplane class, $c(T\Pbb^3)=1+4H+6H^2+4H^3$;
and $X\cap Y$ consists of two lines meeting at a point, a curve of degree~$2$ and 
topological Euler characteristic~$3$, and hence $\csm(X\cap Y)=2[\Pbb^1]+3[\Pbb^0]$.
Thus the right-hand side of~\eqref{eq:splayed} equals
\[
c(TV)\cap \csm(X\cap Y) = 2[\Pbb^1] + 11[\Pbb^0]\quad,
\]
verifying that~\eqref{eq:splayed} does {\em not\/} hold in this case.
\qede\end{example}

\subsection{}
Several particular cases of the Scholium are proven in~\cite{MR3106736}. In this paper
we prove~\eqref{eq:splayed} under a hypothesis generalizing all those particular cases,
but possibly more restrictive than splayedness.
We say that $X$ and $Y$ are `strongly splayed' if $X=D'_1\cap \cdots \cap D'_r$, 
$Y=D''_1\cap \cdots \cap D''_s$ where $\{D'_1,\dots,D'_r\}$, $\{D''_1,\dots,D''_s\}$ are 
splayed sets of hypersurfaces. For example, two hypersurfaces are strongly splayed
if and only if they are splayed. We do not know if splayed subschemes of higher codimension
are necessarily strongly splayed, and this seems an interesting question.

\begin{thmI}
Let $X$, $Y$ be strongly splayed subschemes of a nonsingular variety~$V$.
Then
\begin{equation}\label{eq:splayedthmI}
\csm(X) \cdot \csm(Y) = c(TV) \cap \csm(X\cap Y)\quad.
\end{equation}
\end{thmI}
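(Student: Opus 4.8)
The plan is to reduce Theorem~I to the already-established case of two \emph{splayed hypersurfaces} by an inclusion--exclusion argument at the level of constructible functions, exploiting the additivity of MacPherson's natural transformation $c_*$ (so that $\csm(A\cup B)=\csm(A)+\csm(B)-\csm(A\cap B)$ for subvarieties $A,B$, since the identity holds for the corresponding indicator functions). The combinatorial starting point is the identity of constructible functions
\[
\one_{D_1\cap\cdots\cap D_k}=\sum_{\emptyset\neq I\subseteq\{1,\dots,k\}}(-1)^{|I|-1}\,\one_{\bigcup_{i\in I}D_i},
\]
valid for any finite collection of hypersurfaces $D_1,\dots,D_k$ (verify it pointwise, according to how many of the $D_i$ contain a given point). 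Applying $c_*$ and pushing forward to $V$ expresses the $\csm$ class of \emph{any} intersection of hypersurfaces as an alternating sum of $\csm$ classes of \emph{unions} of those hypersurfaces, each union being again a single hypersurface.

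For the strongly splayed data $X=D'_1\cap\cdots\cap D'_r$ and $Y=D''_1\cap\cdots\cap D''_s$ (we may assume $r,s\ge1$, the remaining cases being immediate), write $P_I=\bigcup_{i\in I}D'_i$ for $I\subseteq\{1,\dots,r\}$ and $Q_J=\bigcup_{j\in J}D''_j$ for $J\subseteq\{1,\dots,s\}$, with $P_\emptyset=Q_\emptyset=\emptyset$. The key observation is that splayedness is inherited by unions: since $\{D'_i\}$ and $\{D''_j\}$ are splayed sets, in suitable local analytic coordinates every $P_I$ is cut out by a function of the first group of coordinates and every $Q_J$ by a function of the second, so each pair $(P_I,Q_J)$ with $I,J\neq\emptyset$ is a pair of splayed hypersurfaces. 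Hence the hypersurface case of the Scholium---one of the particular cases already available, and the base case of strong splayedness---applies:
\[
\csm(P_I)\cdot\csm(Q_J)=c(TV)\cap\csm(P_I\cap Q_J).
\]

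It remains to assemble the pieces. Expanding $\csm(X)$ and $\csm(Y)$ by the displayed identity and invoking the hypersurface relation term by term gives
\[
\csm(X)\cdot\csm(Y)=c(TV)\cap\!\!\sum_{\emptyset\neq I,\ \emptyset\neq J}\!\!(-1)^{|I|+|J|}\csm(P_I\cap Q_J).
\]
On the other hand, expanding $\csm(X\cap Y)$ over all $r+s$ hypersurfaces, grouping by whether $I$ or $J$ is empty, and applying additivity $\csm(P_I\cup Q_J)=\csm(P_I)+\csm(Q_J)-\csm(P_I\cap Q_J)$ to the terms with $I,J$ both nonempty, yields
\[
\csm(X\cap Y)=\csm(X)+\csm(Y)+\!\!\sum_{\emptyset\neq I,\ \emptyset\neq J}\!\!(-1)^{|I|+|J|-1}\bigl[\csm(P_I)+\csm(Q_J)-\csm(P_I\cap Q_J)\bigr].
\]
The stray $\csm(P_I)$ and $\csm(Q_J)$ contributions cancel the leading $\csm(X)+\csm(Y)$, because $\sum_{\emptyset\neq J}(-1)^{|J|}=(1-1)^s-1=-1$ and symmetrically in $I$; what survives is precisely $\sum_{\emptyset\neq I,\emptyset\neq J}(-1)^{|I|+|J|}\csm(P_I\cap Q_J)$. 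Capping with $c(TV)$ then matches the two expressions, proving the theorem.

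The genuinely geometric input is the hypersurface case, which rests on the Segre-class formula for splayed subschemes; within Theorem~I the only points requiring care are the inclusion--exclusion identity and the inheritance of splayedness by the unions $P_I$ and $Q_J$---the latter being exactly where the \emph{strong} splayedness hypothesis is used, as opposed to mere splayedness. The remaining sign bookkeeping is routine.
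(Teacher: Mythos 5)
Your reduction is correct and is essentially the paper's own argument: the paper first establishes the splayed-hypersurface case via the Segre class formula for singularity subschemes, and then deduces Theorem~I by exactly this inclusion--exclusion over the unions $P_I$, $Q_J$ (which remain a splayed pair by strong splayedness), packaged there as an induction on linear combinations of indicator functions of splayed hypersurfaces (Theorem~II). The one caveat is that the hypersurface case is not ``already available'' from prior work --- it is itself one of the main results of this paper --- but you correctly identify it as the sole geometric input, resting on the Segre class relation for splayed subschemes.
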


\begin{example}\label{intro:example}
Let $X$ be the union of a $\Pbb^4$ and a transversal $\Pbb^3$ in $V=\Pbb^5$; we choose
coordinates $(x_0:\cdots:x_5)$ so that $X$ has ideal $(x_0(x_1,x_2))$. Let $Y$ be the 
quadric cone with ideal $(x_3^2+x_4^2+x_5^2)$. Both $X$ and $Y$ are singular, and
$X$ is reducible and not pure-dimensional; $X$ and $Y$ are strongly splayed.

The Macaulay2 code from~\cite{MR1956868} may be used to compute the 
$\csm$ classes of $X$ and $Y$:
\begin{align*}
\csm(X) &= [\Pbb^4] + 6[\Pbb^3]+13 [\Pbb^2]+13 [\Pbb^1]+ 6[\Pbb^0] \\
\csm(Y) &= 2[\Pbb^4] + 8[\Pbb^3]+13 [\Pbb^2]+11 [\Pbb^1]+ 5[\Pbb^0] 
\end{align*}
According to Theorem~I, 
\[
c(T\Pbb^5)\cap \csm(X\cap Y) = \csm(X)\cdot \csm(Y)= 2[\Pbb^3]+20 [\Pbb^2]
+87 [\Pbb^1]+219[\Pbb^0]
\]
from which
\[
\csm(X\cap Y) =  2[\Pbb^3]+8 [\Pbb^2]+9 [\Pbb^1]+ 5[\Pbb^0]\quad.
\]
This can be verified by again using~\cite{MR1956868}.
\qede
\end{example}

In fact, the proper level of generality for the result is that of {\em constructible functions:\/}
a $\csm$ class in $A_*V$ is defined for every constructible function on $V$; if $X$ is
a subvariety of $V$, $\csm(X)=\csm(\one_X)$, where $\one_X$ is the indicator function
of $X$. Intersection of varieties corresponds naturally to the {\em product\/} of the 
corresponding constructible functions.

\begin{thmII}
Let $\varphi$, $\psi$ be constructible functions on a nonsingular variety~$V$, 
and assume that $\varphi$ and $\psi$ are strongly splayed. Then
\begin{equation}\label{eq:splayedthmII}
\csm(\varphi) \cdot \csm(\psi) = c(TV) \cap \csm(\varphi\cdot \psi)\quad.
\end{equation}
\end{thmII}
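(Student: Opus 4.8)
The plan is to deduce Theorem~II from Theorem~I by a bilinearity argument. Write $F(V)$ for the group of constructible functions on $V$. The key structural observation is that both sides of~\eqref{eq:splayedthmII} are $\Zbb$-bilinear in the pair $(\varphi,\psi)$. Indeed, MacPherson's transformation $\csm\colon F(V)\to A_*V$ is a homomorphism of abelian groups, so $\csm(\varphi)$ is linear in $\varphi$; the intersection product $\alpha\mapsto \alpha\cdot\beta$ in $V$ is linear; hence the left-hand side $\csm(\varphi)\cdot\csm(\psi)$ is bilinear. On the right-hand side, the pointwise product $(\varphi,\psi)\mapsto\varphi\cdot\psi$ is bilinear by construction, and $\csm$ followed by $c(TV)\cap-$ is linear, so the right-hand side is bilinear as well. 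This reduces the theorem to the case of indicator functions.

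More precisely, I would write $\varphi=\sum_i m_i\,\one_{X_i}$ and $\psi=\sum_j n_j\,\one_{Y_j}$ for subvarieties $X_i$, $Y_j$ of $V$ and integers $m_i$, $n_j$, using that indicator functions of closed subvarieties generate $F(V)$. Since $\one_{X_i}\cdot\one_{Y_j}=\one_{X_i\cap Y_j}$ and $\csm$ is additive, both sides of~\eqref{eq:splayedthmII} expand as $\sum_{i,j} m_i n_j$ times the two sides of the identity
\[
\csm(X_i)\cdot\csm(Y_j)=c(TV)\cap\csm(X_i\cap Y_j)
\]
for the pair $(X_i,Y_j)$. Here one uses that $\csm$ of an indicator function depends only on the underlying set, so that the set-theoretic product $\one_{X_i}\cdot\one_{Y_j}$ and the scheme-theoretic intersection appearing in Theorem~I produce the same class. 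The displayed identity is exactly Theorem~I for $X_i$ and $Y_j$, and it applies provided each such pair is strongly splayed.

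The one point that requires care — and which I expect to be the main obstacle — is to arrange the decompositions so that \emph{every} pair $(X_i,Y_j)$ is strongly splayed, since an arbitrary decomposition of $\varphi$ and $\psi$ into indicator functions need not have this property. This is where the hypothesis that $\varphi$ and $\psi$ are strongly splayed must be used: it supplies, at each point of the relevant intersection, a common analytic splitting of the coordinates in which $\varphi$ is cut out using only the first group of coordinates and $\psi$ using only the second. I would exploit this product structure of the strata of $\varphi$ and of $\psi$ to choose the $X_i$ defined by hypersurfaces in the first coordinates and the $Y_j$ defined by hypersurfaces in the second coordinates, compatibly with a single splitting. Each $X_i$ and each $Y_j$ then inherits a presentation as an intersection of hypersurfaces coming from splayed sets, so that any particular pair $(X_i,Y_j)$ is strongly splayed in the sense required by Theorem~I. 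Once this inheritance is established, the bilinear reduction above yields~\eqref{eq:splayedthmII} immediately.
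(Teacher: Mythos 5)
Your bilinearity skeleton is exactly the engine of the paper's proof (the paper phrases it as an induction on the pair $(r,s)$ of numbers of hypersurfaces in the presentations of $\varphi$ and $\psi$, which amounts to the same thing), but your reduction target is wrong, and this creates a genuine gap. You reduce to the identity $\csm(X_i)\cdot\csm(Y_j)=c(TV)\cap\csm(X_i\cap Y_j)$ for pairs of strongly splayed \emph{subvarieties} and cite Theorem~I. In this paper, however, Theorem~I in codimension greater than one is not proved independently: it is deduced \emph{from} Theorem~II at the end of \S\ref{sec:endfirstproof}. The only case established beforehand is the hypersurface case (Theorem~\ref{thm:hypcase}, for two splayed divisors, obtained via the Segre class formula of Theorem~\ref{thm:Segreclassrel} and inclusion-exclusion). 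So as written your argument is circular; the bilinear reduction must bottom out at pairs of splayed hypersurfaces, not at pairs of subvarieties.

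The second problem is the step you yourself flag as the main obstacle: producing decompositions $\varphi=\sum_i m_i\one_{X_i}$, $\psi=\sum_j n_j\one_{Y_j}$ with every pair $(X_i,Y_j)$ strongly splayed by choosing the $X_i$ and $Y_j$ ``compatibly with a single splitting.'' The splitting in the definition of splayedness is a local analytic isomorphism that varies from point to point of the intersection; there is no single global splitting to be compatible with, so this construction does not get off the ground. Fortunately it is also unnecessary: Definition~\ref{def:splayedcf} \emph{already} hands you global presentations $\varphi=\sum_i a'_i\one_{D'_i}$, $\psi=\sum_j a''_j\one_{D''_j}$ with $\{D'_1,\dots,D'_r\}$ and $\{D''_1,\dots,D''_s\}$ splayed sets of hypersurfaces, and in particular each individual pair $(D'_i,D''_j)$ is splayed. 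Feeding these presentations into your bilinearity argument, with $\one_{D'_i}\cdot\one_{D''_j}=\one_{D'_i\cap D''_j}$ and Theorem~\ref{thm:hypcase} supplying the base identity, repairs the proof and recovers the paper's argument.
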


The precise definition of `strongly splayed' in the context of constructible functions
is given in Definition~\ref{def:splayedcf}; it generalizes naturally the notion for 
subvarieties. Note that
\eqref{eq:splayedthmII} amounts to the statement that the assignment
\[
\varphi \mapsto c(TV)^{-1}\cap \csm(\varphi)
\]
of a class in $A_*V$ from a constructible function $\varphi$
`preserves multiplication' for strongly splayed constructible functions. Again, this is 
clearly false without some kind of transversality condition on the constructible 
functions. It would be interesting to determine weaker conditions than `strong
splayedness' guaranteeing that this multiplicativity property holds.

Our proofs of Theorems~I and~II rely on intersection-theoretic considerations based 
on a formula for the Chern-Schwartz-MacPherson class of a hypersurface 
from~\cite{MR2001i:14009}, and on a general statement about Segre classes proven 
in this note (Theorem~\ref{thm:Segreclassrel}, which should be of independent interest). 
A proof of the Scholium for {\em splayed\/} (rather than {\em strongly splayed\/})
subvarieties should result as a particular case of J\"org Sch\"urmann's
Verdier-Riemann-Roch theorem for Chern-Schwartz-MacPherson classes (\cite{Schup}).

\subsection{}
It is natural to ask whether a version of the Scholium holds for other characteristic 
classes for singular varieties. Substantial work has been carried out comparing the
Chern-Schwartz-MacPherson class to the {\em Chern-Fulton\/} class, another class
agreeing with the Chern class of the tangent bundle for nonsingular varieties.
See Example~4.2.6 (a) in \cite{85k:14004} for the definition (reproduced here
in~\S\ref{sec:cfu}). We denote this class by
$\cFu$. The difference $\csm(X)-\cFu(X)$ is called the {\em Milnor class\/} of $X$, 
since it generalizes Milnor numbers of isolated hypersurface singularities to arbitrary
singularities.

\begin{thmIII}
Let $X$, $Y$ be splayed subschemes of a nonsingular variety~$V$. Then
\begin{equation}\label{eq:splayedthmIII}
\cFu(X) \cdot \cFu(Y) = c(TV) \cap \cFu(X\cap Y)\quad.
\end{equation}
\end{thmIII}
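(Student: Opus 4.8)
The plan is to reduce Theorem~III to the Segre class relation of Theorem~\ref{thm:Segreclassrel}, exploiting the fact that the Chern--Fulton class is built directly out of Segre classes. Recall (from~\S\ref{sec:cfu}) that for a subscheme $X$ of the nonsingular variety $V$, with inclusion $\iota\colon X\hookrightarrow V$, one sets $\cFu(X)=\iota_*\bigl(c(TV|_X)\cap s(X,V)\bigr)$, where $s(X,V)\in A_*X$ is the Segre class of $X$ in $V$, defined through the normal cone $C_XV$. By the projection formula this pushes forward to
\[
\cFu(X)=c(TV)\cap s(X,V)
\]
in $A_*V$, where from now on $s(X,V)$ denotes its image in $A_*V$. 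Because $V$ is nonsingular, $A_*V$ carries the intersection product, making it a commutative ring with unit $[V]$; the operational class $c(TV)\in A^*V$ is invertible, with inverse $c(TV)^{-1}$, and acts on $A_*V$ by cap product compatibly with the intersection product, in the sense that $c(TV)\cap(\alpha\cdot\beta)=(c(TV)\cap\alpha)\cdot\beta$.

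Granting this dictionary, the statement becomes a formal manipulation. Using the compatibility above twice,
\[
\cFu(X)\cdot\cFu(Y)=\bigl(c(TV)\cap s(X,V)\bigr)\cdot\bigl(c(TV)\cap s(Y,V)\bigr)=c(TV)^2\cap\bigl(s(X,V)\cdot s(Y,V)\bigr),
\]
while on the other side
\[
c(TV)\cap\cFu(X\cap Y)=c(TV)\cap\bigl(c(TV)\cap s(X\cap Y,V)\bigr)=c(TV)^2\cap s(X\cap Y,V).
\]
Capping both identities with the inverse $c(TV)^{-2}$, equation~\eqref{eq:splayedthmIII} is seen to be equivalent to the single Segre class identity $s(X,V)\cdot s(Y,V)=s(X\cap Y,V)$ in $A_*V$, where $X\cap Y$ is the scheme-theoretic intersection. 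First I would record this reduction carefully, checking the module--ring compatibility and the invertibility of $c(TV)$ (its positive-codimension part being nilpotent for dimension reasons), and then I would invoke Theorem~\ref{thm:Segreclassrel}, which asserts exactly this multiplicativity of Segre classes for splayed $X$ and $Y$, to conclude.

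The main obstacle is therefore not the deduction above---a formal computation in the Chow ring of the smooth ambient variety---but is entirely absorbed into the Segre class relation of Theorem~\ref{thm:Segreclassrel}. That relation rests on the observation that for splayed subschemes, locally $X=\{f_i(x)=0\}$ and $Y=\{g_j(y)=0\}$ in separated analytic coordinates $(x,y)$, the normal cone $C_{X\cap Y}V$ of the scheme-theoretic intersection splits as a product of the normal cone of $X$ in the $x$-directions and that of $Y$ in the $y$-directions, forcing the Segre classes to multiply; the hypersurface case $s(X,V)\cdot s(Y,V)=\tfrac{XY}{(1+X)(1+Y)}\cap[V]=s(X\cap Y,V)$ already illustrates the mechanism. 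It is worth stressing that this route needs only splayedness, not the stronger hypothesis of Theorems~I and~II: since $\cFu$ is manufactured from the normal cone alone, the product structure of that cone is all that is required, whereas the more delicate behaviour of the Chern--Schwartz--MacPherson class under intersection is what forces strong splayedness there.
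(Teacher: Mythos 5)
Your reduction of the theorem to the single identity $s(X,V)\cdot s(Y,V)=s(X\cap Y,V)$ in the Chow group is exactly the paper's route, and the formal manipulation with $c(TV)$ is fine. The gap is in how you propose to supply that identity. You attribute it to Theorem~\ref{thm:Segreclassrel}, but that theorem does not assert the multiplicativity of Segre classes of splayed subschemes: it is a relation among $\shat(W,V)$, $\shat(Z_1,V)$, $\shat(Z_2,V)$ twisted by line bundles attached to hypersurfaces $D_1\supseteq Z_1$, $D_2\supseteq Z_2$, and its hypotheses require such hypersurfaces to exist and to be splayed together with the $Z_i$. One recovers $s(Z_1,V)\cdot s(Z_2,V)=s(Z_1\cap Z_2,V)$ only by the purely formal substitution $D_1=D_2=0$, which is incompatible with the hypotheses (and the proof) of that theorem --- the paper explicitly flags this point in the remark following Lemma~\ref{lem:multsegre}. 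So the key input of your argument is not actually available from the result you cite.

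Your fallback justification --- that splayedness makes the normal cone $C_{X\cap Y}V$ split locally as a product, ``forcing the Segre classes to multiply'' --- is a heuristic, not a proof: Segre classes are global push-forwards from blow-ups, and a local product structure on the cone does not by itself yield a product formula in $A_*V$. What is needed, and what the paper proves as Lemma~\ref{lem:multsegre}, is a genuinely global argument: splayedness gives $\TV_{12}\cong\TV_1\times_V\TV_2$ (Corollary~\ref{splayedbu}), the preimage of $Z_1\cap Z_2$ in $\TV_{12}$ is then the complete intersection of the pullbacks of the two exceptional divisors $E_1$, $E_2$ (they share no components), and the identity
\[
s(Z_1\cap Z_2,V)=\pi_{1*}\Bigl(\frac{E_1}{1+E_1}\cdot \pi_1^*\pi_{2*}\frac{E_2}{1+E_2}\Bigr)=s(Z_1,V)\cdot s(Z_2,V)
\]
follows from the projection formula together with the base-change identity $\tilde\pi_{2*}\tilde\pi_1^*=\pi_1^*\pi_{2*}$ of Corollary~\ref{fiberint}. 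This argument is in fact simpler than the one behind Theorem~\ref{thm:Segreclassrel}, and it proves the identity in $A_*(Z_1\cap Z_2)$, not just in $A_*V$. Your closing observation --- that only splayedness, not strong splayedness, is needed because $\cFu$ is built from the normal cone --- is correct and is the right way to contrast Theorem~III with Theorems~I and~II.
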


The proof of this result also follows from Segre class considerations, in fact of a simpler
nature than those leading to Theorem~\ref{thm:Segreclassrel}.

It is also natural to ask whether~\eqref{eq:splayed} and~\eqref{eq:splayedthmIII} hold
when $X$ and $Y$ are `in general position'. The following is a prototype situation
where this can be established.

\begin{thmIV}
Let $V$ be a nonsingular variety, and let $X\subseteq V$ be a general very ample
divisor on $V$. Then for all subschemes $Y\subseteq V$,
\begin{align*}\label{eq:splayedthmIV}
c(X)\cdot \csm(Y) &= c(TV)\cap \csm(X\cap Y) \\
c(X)\cdot \cFu(Y) &= c(TV)\cap \cFu(X\cap Y)
\end{align*}
\end{thmIV}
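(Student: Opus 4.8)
The plan is to reduce both identities to a single ``divisor restriction'' formula for each of the two classes. Since $X$ is a general very ample divisor, Bertini guarantees that $X$ is nonsingular, so with $\ell:=c_1(\cO(X))$ we have
\[
c(X)=c(TX)\cap[X]=c(TV)\cap s(X,V)=c(TV)\cap\tfrac{\ell}{1+\ell}\cap[V].
\]
Substituting this into the two asserted identities and using that $A_*V$ is a ring (as $V$ is nonsingular) with $c(TV)$ a unit in $A^*V$, a short manipulation via the projection formula shows that the first identity is equivalent to $\csm(X\cap Y)=\tfrac{\ell}{1+\ell}\cap\csm(Y)$ and the second to $\cFu(X\cap Y)=\tfrac{\ell}{1+\ell}\cap\cFu(Y)$. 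Thus it suffices to prove these two restriction formulas for a general very ample $X$.

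For the first formula I would fix a Whitney stratification of $V$ for which $Y$ and its singular strata are unions of strata. By the Kleiman--Bertini theorem in characteristic zero, a general member $X$ of the very ample linear system is nonsingular and transversal to every stratum; in particular $i\colon X\hookrightarrow V$ meets each stratum properly, the set-theoretic restriction of constructible functions is compatible with the refined Gysin pullback $i^*$, and $\one_Y|_X=\one_{X\cap Y}$. The key input is then the Verdier--Riemann--Roch property of MacPherson's transformation for a transversal regular embedding (Verdier; Sch\"urmann \cite{Schup}):
\[
\csm^X(\one_Y|_X)=c\big(\cO(X)|_X\big)^{-1}\cap i^*\csm^V(\one_Y),
\]
where the normal-bundle factor is forced by the case $Y=V$. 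Pushing forward by $i_*$ and using the projection formula together with $i_*i^*\alpha=\ell\cap\alpha$ for the Cartier divisor $X$ gives $i_*\csm^X(\one_{X\cap Y})=\tfrac{\ell}{1+\ell}\cap\csm^V(\one_Y)$, which is exactly the first restriction formula.

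The second (Chern--Fulton) formula is of a simpler, purely Segre-theoretic nature. Let $\pi\colon\Til V\to V$ be the blow-up of $V$ along $Y$, with exceptional divisor $E$, so that $s(Y,V)=\sum_{j\ge 1}(-1)^{j-1}\pi_*(E^{\,j})$ is the standard blow-up expression. For general $X$ the proper transform of $X$ in $\Til V$ is the blow-up of $X$ along $X\cap Y$, and genericity guarantees that no component of the relevant cone lies in $X$; a direct computation then yields $s(X\cap Y,V)=\tfrac{\ell}{1+\ell}\cap s(Y,V)$. (Alternatively, a general very ample divisor is transversal to a Whitney stratification of $Y$, hence splayed with $Y$, so this is an instance of Theorem~\ref{thm:Segreclassrel}.) Capping with $c(TV)$ converts this into $\cFu(X\cap Y)=\tfrac{\ell}{1+\ell}\cap\cFu(Y)$, completing the second case.

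The main obstacle is the Chern--Schwartz--MacPherson restriction formula; everything else is bookkeeping once it is in hand. Transversality of the general $X$ to the chosen stratification is immediate from Kleiman--Bertini, but the genuine content is that MacPherson's natural transformation commutes with Gysin restriction to a transversal divisor, up to the normal-bundle factor. I would either cite this directly from Sch\"urmann's Verdier--Riemann--Roch theorem \cite{Schup}, or deduce it from the splayed Scholium: at a point where $X$ meets a singular stratum of $Y$, transversality and Thom--Mather local triviality exhibit $Y$ locally as a product with the normal direction to $X$, so that $X$ and $Y$ are splayed and \eqref{eq:splayed} applies. The two delicate points to verify are this local product (splayed) structure along the singular strata of $Y$, and the compatibility of the set-theoretic restriction $\one_Y|_X$ with the refined Gysin pullback $i^*$; both are controlled by transversality to the Whitney stratification.
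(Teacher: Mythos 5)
Your reduction of both identities to the restriction formulas $\csm(X\cap Y)=\frac{\ell}{1+\ell}\cap\csm(Y)$ and $\cFu(X\cap Y)=\frac{\ell}{1+\ell}\cap\cFu(Y)$ is exactly the shape of the paper's argument, and your treatment of the Chern--Fulton half is essentially the paper's: the key input is Lemma~\ref{lem:segrebert}, i.e.\ that a general $X$ meets the normal cone of $Y$ properly, so that the blow-up of $X$ along $X\cap Y$ sits inside the blow-up of $V$ along $Y$ and $s(X\cap Y,X)=X\cdot s(Y,V)$. (Minor point: the paper works with $s(X\cap Y,X)$ and invokes the ambient-independence of $\cFu$, rather than asserting $s(X\cap Y,V)=\frac{\ell}{1+\ell}\cap s(Y,V)$ directly; your version is recoverable from theirs but the passage from $s(X\cap Y,X)$ to $s(X\cap Y,V)$ is not automatic and is best avoided.) For the $\csm$ half your primary route is genuinely different: you invoke transversality to a Whitney stratification and the Verdier--Riemann--Roch property of MacPherson's transformation, citing \cite{Schup}. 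The paper instead applies a Veronese embedding to reduce to a general hyperplane section and quotes Proposition~2.6 of \cite{MR3031565}, whose proof stays entirely within Segre-class technology (inclusion--exclusion down to hypersurfaces, then Lemma~\ref{lem:csmfromsegre} and again Lemma~\ref{lem:segrebert}). Your route buys generality and conceptual clarity at the price of importing a deep external theorem; the paper's route is self-contained relative to its own toolkit and is the same mechanism already driving Theorem~I.

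One concrete warning: your proposed fallback---deducing the $\csm$ restriction formula from the splayed Scholium~\eqref{eq:splayed} by arguing that ``transversality and Thom--Mather local triviality exhibit $Y$ locally as a product\ldots so that $X$ and $Y$ are splayed''---does not work, and the paper constructs an explicit counterexample (Example~\ref{ex:noBertini}, the $4$-lines divisor $xy(x+y)(x+yz)$ in $\Cbb^3$). Splayedness requires an \emph{analytic} local product decomposition, whereas Whitney transversality and Thom--Mather only provide a topological one; a general hyperplane through a general point of the $z$-axis is transversal to the natural stratification of that divisor yet is not splayed with it, precisely because the divisor is not analytically trivial along the axis. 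So you must rely on the Verdier--Riemann--Roch statement (which only needs non-characteristic/stratified-transversal pullback), or on the paper's Segre-class argument; the ``splayed Bertini'' shortcut is unavailable. The same caution applies to the parenthetical in your Chern--Fulton paragraph where you suggest that a general very ample divisor is splayed with $Y$ as an alternative justification.
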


Theorem~IV hints that a condition analogous to splayedness may satisfy results
along the lines of the Bertini or Kleiman-Bertini theorems. It would be interesting
to establish a precise result of this type.
In general, however,  a ``splayed'' Bertini theorem cannot hold, as the following 
example illustrates:

\begin{example}\label{ex:noBertini}
Let $X$ be the so-called \emph{4-lines divisor} in $\Cbb^3$, given by the polynomial 
$xy(x+y)(x+yz)$. It is well-known that $X$ is not analytically trivial along the $z$-axis, 
i.e., there is no analytic isomorphism between two hyperplane sections $X_{t_1}$ and 
$X_{t_2}$, where $X_t:=X \cap (\mathbb{C}^2 \times \{t\})$. If $X$ were splayed with 
a general hyperplane at a general point of the $z$-axis, then it would be possible to 
write the equation of the divisor only using two coordinates at that point. This would 
imply that nearby sections are analytically isomorphic.
\qede\end{example}

{\em Acknowledgements.} P.A.'s research is partially supported by a Simons collaboration 
grant. E.F. was partially supported by Austrian Science Fund (FWF) project J3326 and 
this material is based upon work supported by the National Science Foundation under 
Grant No.~0932078 000, while E.F.~was in residence at the Mathematical Science 
Research Institute (MSRI) in Berkeley, California, during the spring semester of 2013.

 Both authors are very grateful to J\"org Sch\"urmann for communications regarding
the material covered in this note.


\section{Proofs of Theorems~I and~II}

\subsection{Splayed blow-ups}\label{sec:splayed}
Throughout the paper, $V$ will denote a smooth complex algebraic variety;
several results extend without change to the context of nonsingular algebraic
varieties over an algebraically closed field of characteristic~$0$. (See e.g.,
\cite{MR1063344} for a treatment of Chern-Schwartz-MacPherson classes in 
this generality.)
We call two subschemes $Z_1, Z_2
\subseteq V$ \emph{splayed} if at every point $p$ in the intersection of $Z_1$
and $Z_2$ there is a local analytic isomorphism $\varphi: V\to V'\times V''$, 
and subschemes $Z_1'\subseteq V'$, $Z_2''\subseteq V''$ such that 
$Z_1=\varphi^{-1} (Z'_1\times V'')$ and $Z_2=\varphi^{-1}(V'\times Z''_2)$.
Equivalently, there are analytic coordinates for $V$ at $p$ such that $Z_1$
and $Z_2$ are defined in different sets of variables.

More generally, we will say that two sets of subschemes are splayed in $V$ 
if at each point there is a local analytic
isomorphism $\varphi$ as above, such that the schemes in the first set are
inverse images from the first factor of the product $V'\times V''$, and the schemes
in the second sets are inverse images from the second factor.

Denote by $\pi_i: \TV_i\to V$ the blowup of $V$ along $Z_i$. Denote further
by $\TV_{12}$ the blowup of $\TV_1$ along the inverse image $\pi^{-1}_1Z_2$
of $Z_2$ and by $\TV_{21}$ the blowup of $\TV_2$ along the inverse image
$\pi^{-1}_2Z_1$ of $Z_1$.
We begin by recalling the following fact, for which the splayedness assumption
on $Z_1$, $Z_2$ or the smoothness of $V$ are not needed.

\begin{prop}
The blow-ups $\TV_{12}$ and $\TV_{21}$ are isomorphic, and they are isomorphic
to the blow-up of $V$ along $Z_1\cup Z_2$, where the defining ideal sheaf of
$Z_1 \cup Z_2$ in $V$ is the product of the ideal sheaves defining $Z_1$ and
$Z_2$.
\end{prop}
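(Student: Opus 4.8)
The plan is to establish all three isomorphisms at once by showing that $\TV_{12}$, $\TV_{21}$, and the blow-up of $V$ along $\cI_1\cI_2$ all enjoy the same universal property. Here I write $\cI_1$, $\cI_2$ for the ideal sheaves of $Z_1$, $Z_2$, and recall that for an ideal sheaf $\cI$ the blow-up $\pi:\text{Bl}_\cI V\to V$ is terminal among $V$-schemes on which $\cI$ becomes invertible: $\cI\cdot\cO_{\text{Bl}_\cI V}$ is an invertible ideal sheaf, and any $f:W\to V$ with $f^{-1}\cI\cdot\cO_W$ invertible factors uniquely through $\pi$. Since $\cI_1\cI_2=\cI_2\cI_1$, the scheme $\text{Bl}_{\cI_1\cI_2}V$ is symmetric in $Z_1$ and $Z_2$, so it is enough to identify it with $\TV_{12}$; exchanging the roles of the two subschemes then yields the identification with $\TV_{21}$, and hence $\TV_{12}\cong\TV_{21}$.

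First I would unwind the universal property of the iterated blow-up. A morphism $f:W\to V$ lifts (uniquely) to $\TV_1$ precisely when $f^{-1}\cI_1\cdot\cO_W$ is invertible, and such a lift lifts further to $\TV_{12}=\text{Bl}_{\pi_1^{-1}\cI_2}\TV_1$ precisely when $f^{-1}\cI_2\cdot\cO_W$ is invertible as well, because the pullback to $W$ of $\pi_1^{-1}\cI_2\cdot\cO_{\TV_1}$ is $f^{-1}\cI_2\cdot\cO_W$. Thus $\TV_{12}$ is terminal among $V$-schemes on which both $\cI_1$ and $\cI_2$ become invertible. One direction of the comparison is then immediate: a product of invertible ideal sheaves is invertible, so $(\cI_1\cI_2)\cdot\cO_{\TV_{12}}$ is invertible, and the universal property of $\text{Bl}_{\cI_1\cI_2}V$ supplies a canonical morphism $\TV_{12}\to\text{Bl}_{\cI_1\cI_2}V$ over $V$.

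The main obstacle is the reverse direction, which requires showing that on $Y:=\text{Bl}_{\cI_1\cI_2}V$ each of $\cI_1\cO_Y$ and $\cI_2\cO_Y$ is invertible, not merely their product. This is local, so I would pass to an affine $\Spec R$ with $\cI_1=(a_1,\dots,a_p)$ and $\cI_2=(b_1,\dots,b_q)$ and work on the Proj of the Rees algebra $\bigoplus_n(\cI_1\cI_2)^n$, which is covered by the charts $D_+(a_ib_jt)$. On the chart $D_+(a_{i_0}b_{j_0}t)$ the ideal $(\cI_1\cI_2)\cO$ is generated by $a_{i_0}b_{j_0}$, and the key point is the identity $a_i=a_{i_0}\cdot\frac{a_ib_{j_0}t}{a_{i_0}b_{j_0}t}$ valid in this chart, which exhibits $\cI_1\cO=(a_{i_0})$ as principal; symmetrically $\cI_2\cO=(b_{j_0})$. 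Since $a_{i_0}b_{j_0}$ is a nonzerodivisor, so are its two factors, so both ideals are invertible. Granting this, the invertibility of $(\cI_1\cI_2)\cO_Y$ forces that of $\cI_1\cO_Y$ and $\cI_2\cO_Y$, and the universal property of $\TV_{12}$ yields a morphism $Y\to\TV_{12}$ over $V$. Uniqueness in the two universal properties makes these morphisms mutually inverse, giving $\TV_{12}\cong Y$; combined with the symmetry remark this finishes the proof. I expect this local invertibility computation to be the only substantive point, the rest being formal manipulation of universal properties.
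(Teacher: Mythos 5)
Your argument is correct and is exactly the route the paper takes: the paper's proof consists of one sentence invoking the universal property of blow-ups and citing Eisenbud--Harris (Lemma IV-41) and Li (Lemma 3.2), and your proposal simply writes out that standard argument in full, including the one genuinely nontrivial point (the chart computation showing $\cI_1\cO_Y$ and $\cI_2\cO_Y$ are separately invertible on $\mathrm{Bl}_{\cI_1\cI_2}V$), which you handle correctly.
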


\begin{proof}
Both statements follow from the universal property of blow-ups; see for 
example \cite{MR1730819}, Lemma IV-41 and \cite{MR2595553}, Lemma 3.2.
\end{proof}

We want to compare $\TV_{12}$ to the fiber product of $\TV_1$ and $\TV_2$.

By the universal property of fiber products, there is a unique morphism $\alpha:
\TV_{12} \to \TV_1\times_V \TV_2$:
\[
\xymatrix{ 
\TV_{12} \ar@/_/[ddr] \ar@/^/[drr] \ar[dr]|-{\exists ! \alpha} \\ 
& \TV_1 \times_V \TV_2 \ar[d] \ar[r] & \TV_2 \ar[d]^{\pi_2} \\ 
& \TV_1 \ar[r]_{\pi_1} & V 
}
\]

\begin{prop}\label{modfibp}
Let $V$ be an irreducible variety. Then $\alpha$ induces an isomorphism 
from $\TV_{12}$ to the unique irreducible component 
$\TV_1\hat\times_V \TV_2$ of $\TV_1\times_V \TV_2$ mapping dominantly to $V$.
\end{prop}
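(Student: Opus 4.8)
The plan is to produce an explicit two-sided inverse to $\alpha$ by invoking the universal property of the blow-up $\TV_{12}=\mathrm{Bl}_{\pi_1^{-1}Z_2}\TV_1$, after first pinning down the dominant component through which $\alpha$ factors. First I would work over the dense open $U:=V\setminus(Z_1\cup Z_2)$: there both $\pi_1$ and $\pi_2$ restrict to isomorphisms, so $\TV_1$, $\TV_2$, $\TV_1\times_V\TV_2$ and $\TV_{12}$ are all canonically identified with $U$, and in particular $\alpha$ is an isomorphism over $U$. Since $V$ is an irreducible variety, $\TV_1$ is integral (a blow-up of a variety along a proper closed subscheme is a variety), hence so is $\TV_{12}$, and its generic point maps to the generic point of $V$. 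As both $\pi_i$ are birational, the fiber of $\TV_1\times_V\TV_2$ over the generic point of $V$ is a single reduced point; thus there is a unique component $W:=\TV_1\hat\times_V\TV_2$ dominating $V$, it is generically reduced, and I give it its reduced (hence integral) structure. The image $\alpha(\TV_{12})$ is irreducible with this same generic point, so $\alpha$ factors as a birational morphism $\alpha\colon\TV_{12}\to W$.

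Next I would construct $\beta\colon W\to\TV_{12}$. Writing $p_1\colon W\to\TV_1$, $p_2\colon W\to\TV_2$ for the projections, the universal property of $\TV_{12}=\mathrm{Bl}_{\pi_1^{-1}Z_2}\TV_1$ factors $p_1$ through $\TV_{12}$ as soon as the inverse image ideal sheaf of the center $\pi_1^{-1}Z_2$, namely $p_1^{-1}(\cI_{Z_2}\cO_{\TV_1})\cdot\cO_W=\cI_{Z_2}\cO_W$, is invertible. The key observation is that this same ideal also equals $p_2^{-1}(\cI_{Z_2}\cO_{\TV_2})\cdot\cO_W$, and $\cI_{Z_2}\cO_{\TV_2}$ is the invertible ideal of the exceptional divisor of $\pi_2$. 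Hence $\cI_{Z_2}\cO_W$ is the image of a morphism $L\to\cO_W$ out of the line bundle $L:=p_2^*(\cI_{Z_2}\cO_{\TV_2})$; this morphism is nonzero, being an isomorphism over $U$, and on the integral scheme $W$ a nonzero map from a line bundle to $\cO_W$ is injective. Therefore $\cI_{Z_2}\cO_W\cong L$ is invertible, and the universal property yields a unique $\beta\colon W\to\TV_{12}$ with $q\circ\beta=p_1$, where $q\colon\TV_{12}\to\TV_1$ denotes the blow-up map.

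Finally I would check the two composites. For $\beta\circ\alpha\colon\TV_{12}\to\TV_{12}$ one has $q\circ(\beta\circ\alpha)=p_1\circ\alpha=q$; since on $\TV_{12}$ the inverse image of the center is already Cartier, the uniqueness clause of the universal property of $\TV_{12}$ forces $\beta\circ\alpha=\mathrm{id}_{\TV_{12}}$. For $\alpha\circ\beta\colon W\to W$, the relation $\alpha\circ\beta\circ\alpha=\alpha$ shows that $\alpha\circ\beta$ agrees with $\mathrm{id}_W$ on the dense image of $\alpha$; as $W$ is reduced and separated over $V$, two morphisms $W\to W$ agreeing on a dense open coincide, so $\alpha\circ\beta=\mathrm{id}_W$. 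Thus $\alpha$ is an isomorphism onto $W=\TV_1\hat\times_V\TV_2$.

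The step I expect to be the main obstacle is the invertibility of $\cI_{Z_2}\cO_W$ in the second paragraph: the inverse image of an invertible ideal sheaf need not remain an ideal sheaf on a non-reduced or reducible base, and it is exactly here that passing to the integral dominant component $W$—rather than the whole, possibly non-reduced and reducible, fiber product $\TV_1\times_V\TV_2$—is what makes the argument work. Everything else is a formal consequence of the universal property of blowing up combined with the birational comparison over $U$, and notably neither splayedness nor smoothness of $V$ is needed.
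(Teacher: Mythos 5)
Your proof is correct and follows essentially the same route as the paper's: both construct the inverse of $\alpha$ via the universal property of blowing up, the key point being that the inverse image ideal of the center becomes invertible on the \emph{integral} component $\TV_1\hat\times_V \TV_2$ (you check this for $\cI_{Z_2}$ using the iterated description $\TV_{12}=B\ell_{\pi_1^{-1}Z_2}\TV_1$, while the paper checks the inverse images of both centers using the description of $\TV_{12}$ as the blow-up along $Z_1\cup Z_2$). Your line-bundle injectivity argument simply makes precise the paper's assertion that these inverse images are Cartier divisors on the modified fiber product, and your verification of the two composites spells out what the paper calls ``immediately checked.''
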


\begin{remark}
M.~Kwieci\'nski (\cite{kwiecinski1994transforme}) calls this irreducible component
the `modified fiber product', and observes that it is a product in the category of 
proper birational morphisms from varieties to $V$.
\qede\end{remark}

\begin{proof}
Let $\hat\alpha$ be the induced morphism $\TV_{12}\to \TV_1\hat\times_V\TV_2$.
Since $\pi_1^{-1}(Z_1)$ and $\pi_2^{-1}(Z_2)$ are Cartier divisors,
it follows that their inverse images in $\TV_1\hat\times_V \TV_2$ are Cartier 
divisors, and hence so is the inverse image of $Z_1\cup Z_2$.
By the universal property of blow-ups, we obtain a morphism
$\TV_1\hat\times_V \TV_2 \to \TV_{12}$, which is immediately checked to be
the inverse of $\hat\alpha$.
\end{proof}

\begin{corol}\label{splayedbu}
Assume that $V$ is nonsingular and $Z_1$ and $Z_2$ are splayed in $V$.
Then $\alpha$ is an isomorphism
$\TV_{12}\to \TV_1\times_V \TV_2$.
\end{corol}

\begin{proof}
We claim that $\TV_1\times_V \TV_2$ is irreducible. Indeed, it suffices 
to verify this fact locally analytically over every $p$ in $V$, so by
the splayedness condition we may assume $Z_1=Z_1'\times V''$,
$Z_2=V'\times Z'_2$ in $V=V'\times V''$. In this situation
$\TV_1\times_V \TV_2\cong B\ell_{Z'_1}V'\times B\ell_{Z'_2}''V''$.

Since $B\ell_{Z'_1}V'$ and $B\ell_{Z'_2}V''$ are irreducible, this
product is irreducible.

With notation as in Proposition~\ref{modfibp}, this shows that 
$\TV_1\times_V \TV_2=\TV_1\hat\times_V \TV_2$, and the result follows
then from the proposition.
\end{proof}

\begin{remark} \label{rem:blowupirreducible}
It is worth pointing out that $\alpha$ is not an isomorphism in general.
For example, let $V=\Abb^2$ and let let $Z_1=Z_2$ be the origin $p=(0,0)$.
Then $\TV_1\times_V \TV_2$ consists of two components: an isomorphic
copy of the blow-up of $V$ at $p$, and a component isomorphic to $E\times E$,
where $E$ is the exceptional divisor in $\TV_1=\TV_2$. (This is easily verified
by a computation with charts.) 

Tracing the proof of Proposition~\ref{modfibp}, the problem is that while the
inverse image of e.g., $Z_1$ in $\TV_1\times_V \TV_2$ is locally principal, it contains
a whole component of the fiber product (i.e., local generators of its ideal are zero-divisors), 
so this subscheme is not a Cartier divisor of the fiber product.
It is however a Cartier divisor in the {\em modified\/} fiber product.

On the other hand, $\alpha$ may be an isomorphism even if $Z_1$ and $Z_2$ are not splayed. For instance,
if $Z_1$ and $Z_2$ are any Cartier divisors, then all blow-ups are isomorphisms, and
so is the fiber product. For a more substantive example, 
take two coordinate axes $Z_1$, $Z_2$ in $V=\Abb^3$. It can easily be 
seen via computation in charts that $\TV_1\times_V \TV_2$ is irreducible and isomorphic to 
the blow-up of $V$ along $Z_1 \cup Z_2$. Thus in this case $\alpha$ is an isomorphism, 
although $Z_1$ and $Z_2$ are not splayed according to our definition. 
\qede\end{remark}

\begin{corol}\label{fiberint}
Let $Z_1$, $Z_2$ be splayed in $V$, and consider the blow-ups along $Z_1$, $Z_2$,
and $Z_1\cup Z_2$ as above.
\[
\xymatrix{
\TV_{12} \ar[d]_{\tilde \pi_2} \ar[r]^{\tilde \pi_1} & \TV_2 \ar[d]^{\pi_2} \\
\TV_1 \ar[r]_{\pi_1} & V
}
\]
Then the homomorphisms $\tilde \pi_{2*} {\tilde \pi_1}^*$ and $\pi_1^* \pi_{2*}$ from 
$A_*\TV_2$ to $A_*\TV_1$ coincide.
\end{corol}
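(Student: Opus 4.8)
The plan is to recognize both composites as instances of a single refined Gysin homomorphism attached to the morphism $\pi_1$ to the \emph{nonsingular} variety $V$, and then to invoke the compatibility of Gysin homomorphisms with proper pushforward. The essential geometric input is Corollary~\ref{splayedbu}: because $Z_1$ and $Z_2$ are splayed, the square is genuinely Cartesian, i.e.\ $\TV_{12}\cong\TV_1\times_V\TV_2$. Writing $n=\dim V$ and using that $V$ is smooth, the diagonal $\Delta\colon V\hookrightarrow V\times V$ is a regular embedding of codimension $n$, and the Cartesian square
\[
\xymatrix{
\TV_{12} \ar[r] \ar[d] & \TV_1\times\TV_2 \ar[d]^{\pi_1\times\pi_2} \\
V \ar[r]_-{\Delta} & V\times V
}
\]
identifies $\TV_{12}=(\pi_1\times\pi_2)^{-1}(\Delta)$ and produces a refined Gysin homomorphism $\Delta^!\colon A_*(\TV_1\times\TV_2)\to A_*(\TV_{12})$.

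Next I would interpret the two pullbacks through $\Delta^!$. Since $V$ is smooth, the pullback along $\pi_1$ is the Gysin homomorphism of a morphism to a nonsingular target (\cite{85k:14004}, \S8.1), which on a class $\alpha\in A_*V$ is given by $\pi_1^*\alpha=\Delta^!([\TV_1]\times\alpha)$; the pullback $\tilde\pi_1^*$ is the base change of this same homomorphism along $\pi_2$, so that $\tilde\pi_1^*\beta=\Delta^!([\TV_1]\times\beta)$ for $\beta\in A_*\TV_2$. This is indeed the meaning of $\tilde\pi_1^*$: as $\TV_2$ is singular it is neither a flat nor a smooth-target pullback, but precisely the refined Gysin homomorphism $\pi_1^{!}$ evaluated over the base $\TV_2$. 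With these two formulas the identity to be proved reads
\[
\tilde\pi_{2*}\bigl(\Delta^!([\TV_1]\times\beta)\bigr)=\Delta^!\bigl([\TV_1]\times\pi_{2*}\beta\bigr).
\]

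To finish I would apply the compatibility of $\Delta^!$ with proper pushforward (\cite{85k:14004}, Theorem~6.2(a)) to the proper morphism $\mathrm{id}_{\TV_1}\times\pi_2\colon\TV_1\times\TV_2\to\TV_1\times V$. Over $\Delta$ this morphism restricts exactly to $\tilde\pi_2\colon\TV_{12}\to\TV_1$, because $(\pi_1\times\mathrm{id})^{-1}(\Delta)$ is the graph of $\pi_1$, canonically $\TV_1$. Thus the theorem gives $\tilde\pi_{2*}\,\Delta^!=\Delta^!\,(\mathrm{id}_{\TV_1}\times\pi_2)_*$, and combining this with $(\mathrm{id}_{\TV_1}\times\pi_2)_*([\TV_1]\times\beta)=[\TV_1]\times\pi_{2*}\beta$ (functoriality of pushforward under exterior products) yields the displayed equality, hence the corollary.

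The one genuine obstacle is conceptual rather than computational: one must pin down that the symbols $\pi_1^*$ and $\tilde\pi_1^*$ denote the \emph{same} refined Gysin homomorphism $\pi_1^{!}$ evaluated over the two bases $V$ and $\TV_2$, and that this is legitimate only because Corollary~\ref{splayedbu} makes the square Cartesian. Without splayedness, $\TV_{12}$ is merely the modified fiber product $\TV_1\hat\times_V\TV_2$ of Proposition~\ref{modfibp}, a proper component of $\TV_1\times_V\TV_2$; then $\TV_{12}\neq(\pi_1\times\pi_2)^{-1}(\Delta)$, the class $\Delta^!([\TV_1]\times\beta)$ picks up contributions from the remaining components (compare Remark~\ref{rem:blowupirreducible}), and the base-change identification underlying the whole argument fails. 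Once the Cartesian property is secured, the rest is the formal functoriality of Gysin maps and exterior products.
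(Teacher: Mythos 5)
Your proof is correct, and it reaches the corollary by a genuinely different route from the paper's. The paper disposes of the statement in one line by citing the excess intersection formula (\cite{85k:14004}, Example~17.4.1~(a)) for the fiber square, $\pi_1^* \pi_{2*}(\alpha) = \tilde \pi_{2*} (c_e(\cE)\cap {\tilde \pi_1}^*(\alpha))$, and observing that the excess bundle has rank $e=0$ because $\pi_1$ and $\tilde\pi_1$ are both birational. You instead unwind what that formula rests on: both $\pi_1^*$ and $\tilde\pi_1^*$ are exhibited as the single refined Gysin homomorphism $\Delta^!$ of the diagonal of the smooth $V$ applied to exterior products, and the identity becomes the push-forward compatibility of refined Gysin maps (\cite{85k:14004}, Theorem~6.2~(a)) together with $(\mathrm{id}\times\pi_2)_*([\TV_1]\times\beta)=[\TV_1]\times\pi_{2*}\beta$. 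Both arguments turn on exactly the same geometric input, Corollary~\ref{splayedbu}, and you correctly isolate that as the step which fails without splayedness (cf.~Remark~\ref{rem:blowupirreducible}). What your version buys is that it uses only the smoothness of $V$: the appeal to Example~17.4.1~(a) nominally requires $\pi_1$ and $\tilde\pi_1$ to be l.c.i.\ morphisms, which is not literally automatic when $\TV_1$, $\TV_2$, $\TV_{12}$ are singular, so your argument makes explicit why the one-line proof is legitimate. The only caveat is that you \emph{stipulate} the meaning of $\tilde\pi_1^*$ (as the refined Gysin $\pi_1^!$ evaluated over the base $\TV_2$) rather than matching a previously fixed definition; this is harmless here, since in the two places the corollary is applied (Lemma~\ref{pullback} and Lemma~\ref{lem:multsegre}) the class being pulled back is a power series in the Cartier divisor class $E_2$ capped with $[\TV_2]$, on which every candidate pullback agrees, being determined by the pullback of Cartier divisors and of the fundamental class.
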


\begin{proof}
The maps are all proper l.c.i.~morphisms, and the diagram is a fiber square
by Corollary~\ref{splayedbu}. By \cite{85k:14004}, Example~17.4.1 (a),
\[
\pi_1^* \pi_{2*}(\alpha) = \tilde \pi_{2*} (c_e(\cE)\cap {\tilde \pi_1}^*(\alpha))
\]
for all $\alpha\in A_*(\TV_2)$, where $\cE$ is an excess bundle and $e$ is the
difference in the codimensions of $\pi_1$ and $\tilde \pi_1$. Here both $\pi_1$ 
and $\tilde \pi_1$ are birational, so $e=0$, and $c_e(\cE)=1$, hence the equality
follows.
\end{proof}

\subsection{A Segre class formula}\label{sec:Segreclassformula}
The $\csm$ class of a hypersurface $D$ in a nonsingular variety may be
expressed in terms of the {\em Segre class\/} of the singularity
subscheme~$JD$ in $V$.  The precise relationship (from~\cite{MR2001i:14009}) 
will be recalled below. The hypersurface case of Theorem~I will then follow from 
a statement on Segre classes of singularity subschemes of splayed hypersurfaces. 
In this subsection we prove a more general form of this statement 
(Theorem~\ref{thm:Segreclassrel}).

{\em Reminder.\/} Segre classes are one of the ingredients of
Fulton-MacPherson intersection theory, and the reader is addressed to
Chapter~4 of~\cite{85k:14004} for a thorough treatment of these
classes. The following summary should suffice for the purpose of
this paper. The Segre class $s(S,X)$ of a proper subscheme $S$ of a 
scheme $X$ is the class in the Chow group of $S$ determined by the 
following properties:
\begin{itemize}
\item Birational invariance: If $f: X' \to X$ is a proper birational morphism,
then $s(S,X)=f_* s(f^{-1}(S), X')$ (Proposition 4.2(a) in~\cite{85k:14004});
\item If $S$ is a Cartier divisor in $X$, then $s(S,X)=[S]-[S]^2+[S]^3-\cdots$ 
(Corollary 4.2.2 in~\cite{85k:14004}).
\end{itemize}
We use the shorthand $\dfrac {[S]}{1+S}$ for the class $[S]-[S]^2+[S]^3-\cdots$.

By the first property, blowing-up $X$ along $S$ reduces the computation of
$s(S,X)$ to the computation of the Segre class for the exceptional divisor
in the blow-up, which may be performed by using the second property. 
In practice it is often very difficult to carry out this process, but useful formulas 
for Segre classes may be proven by using this strategy. The second property
is a particular case of the following fact:
\begin{itemize}
\item If $S$ is regularly embedded in $X$, with normal bundle $N_SX$, 
then $s(S,X) = c(N_SX)^{-1}\cap [S]$ (Corollary 4.2.1 in~\cite{85k:14004}).
\end{itemize}
Below, this will be used in order to compute the Segre class of the complete 
intersection of two hypersurfaces.

Let $Z_1,Z_2,V,$ etc.~be as in \S\ref{sec:splayed}. By the birational invariance
of Segre classes recalled above,
\[
\pi_{1*} s(\pi_1^{-1}(Z_2),\TV_1)=s(Z_2,V)\quad.
\]
In the splayed situation, a stronger statement holds.

\begin{lemma}\label{pullback}
Let $Z_1,Z_2$ be splayed in $V$ (as in \S\ref{sec:splayed}). Then
\[
s(\pi_1^{-1}(Z_2),\TV_1) = \pi_1^* s(Z_2,V)\quad.
\]
\end{lemma}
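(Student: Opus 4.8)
The plan is to exploit the local product structure of the splayed situation together with the birational invariance of Segre classes. Since Segre classes can be computed locally analytically (they are compatible with flat base change, and the statement to be proven is an equality of classes that may be verified on an analytic cover of a neighborhood of each point of the intersection), I would reduce to the model case where $V = V' \times V''$, $Z_1 = Z_1' \times V''$, and $Z_2 = V' \times Z_2''$, with $Z_i'$ subschemes of the respective factors. The key observation is that in this product situation the blow-up $\TV_1 = B\ell_{Z_1}V$ is itself a product: $\TV_1 \cong (B\ell_{Z_1'}V') \times V''$, with $\pi_1$ acting as $\pi_1' \times \mathrm{id}_{V''}$, where $\pi_1' : B\ell_{Z_1'}V' \to V'$ is the blow-up in the first factor.

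With this identification in hand, I would analyze the two sides separately. On one side, $s(Z_2, V)$ is a class pulled back from the second factor $V''$, namely $s(Z_2, V) = p''^* s(Z_2'', V'')$ where $p'' : V' \times V'' \to V''$ is the projection; this follows because $Z_2 = V' \times Z_2''$ is a product and Segre classes respect such product/flat-pullback structure. Pulling this back via $\pi_1 = \pi_1' \times \mathrm{id}$ gives $\pi_1^* s(Z_2, V)$ as the pullback of $s(Z_2'', V'')$ along the second-factor projection of $\TV_1$. On the other side, $\pi_1^{-1}(Z_2)$ inside $\TV_1 \cong (B\ell_{Z_1'}V') \times V''$ is exactly $(B\ell_{Z_1'}V') \times Z_2''$, again a product in which the embedded subscheme is pulled back from the second factor. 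Its Segre class in $\TV_1$ is therefore likewise the second-factor pullback of $s(Z_2'', V'')$, and the two computed expressions visibly coincide.

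The main structural step — and the one I expect to require the most care — is justifying that Segre classes behave well under the product decomposition and the analytic localization, i.e., that $s(W' \times T, W' \times W'') = q^* s(T, W'')$ for the projection $q$, and that verifying the equality of classes locally analytically suffices to conclude it globally. The first point can be handled by the compatibility of the blow-up construction with products (the blow-up of $W' \times W''$ along $W' \times T$ is $W' \times B\ell_T W''$), reducing both Segre classes to the Segre class of the same exceptional divisor in the second factor, crossed with $W'$; the Cartier-divisor formula $\frac{[S]}{1+S}$ then gives the identical self-intersection expression on both sides because the exceptional divisor restricts from the second factor. The localization point is the more delicate one, since Chow groups are not in general determined by analytic-local data; here I would lean on the fact that Segre classes are defined via the blow-up and the associated exceptional divisor, and that the splayed hypothesis provides, at each point of $Z_1 \cap Z_2$, an analytic isomorphism matching the two blow-up constructions, so that the difference of the two sides is a class supported away from the intersection and is handled by the birational-invariance identity $\pi_{1*} s(\pi_1^{-1}(Z_2), \TV_1) = s(Z_2, V)$ already recalled, which pins down the class over the complement of $Z_1$.
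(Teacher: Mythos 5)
Your local computation in the product model is correct, and it captures the geometric reason the lemma is true; but the globalization step is a genuine gap, and the patch you propose for it does not work. The splayed hypothesis gives the product decomposition only in an analytic neighborhood of each point of $Z_1\cap Z_2$; there is no global product to reduce to, and an equality in the Chow group of $\TV_1$ cannot be verified ``locally analytically'': rational equivalence is a global relation, restriction of Chow classes to analytic opens is not defined, and even agreement on the members of a Zariski open cover does not imply equality of classes. Your fallback --- that any discrepancy is pinned down by the push-forward identity $\pi_{1*}s(\pi_1^{-1}(Z_2),\TV_1)=s(Z_2,V)$ --- fails exactly where it is needed: that identity only says the difference of the two sides pushes forward to zero in $A_*V$, and $\pi_{1*}$ has a large kernel supported over $Z_1$, since classes on the positive-dimensional fibers of $\pi_1$ die under push-forward without being zero. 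The entire content of the lemma is concentrated over $Z_1\cap Z_2$, as the counterexample $V=\Abb^2$, $Z_1=Z_2=\{p\}$ of Remark~\ref{rem:beyondsplayed} shows: there the two sides agree away from $Z_1$ and have the same push-forward, yet they differ.

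The paper's proof sidesteps this by using the local product structure only to establish a statement that genuinely can be checked locally, namely that the fiber product $\TV_1\times_V\TV_2$ is irreducible and hence coincides with $\TV_{12}$ (Corollary~\ref{splayedbu}). The Chow-group identity is then obtained globally: by birational invariance,
\[
s(\pi_1^{-1}(Z_2),\TV_1)=\tilde\pi_{2*}\tilde\pi_1^*\left(\frac{[E_2]}{1+E_2}\right),
\]
and the push--pull (excess intersection) formula $\tilde\pi_{2*}\tilde\pi_1^*=\pi_1^*\pi_{2*}$ for the resulting fiber square (Corollary~\ref{fiberint}) converts this into $\pi_1^*s(Z_2,V)$. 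To salvage your approach you would need to route your local observation through some such global mechanism, rather than through a local verification of the class identity itself.
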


\begin{proof}
Consider the diagram
\[
\xymatrix{
\TV_{12} \ar[d]_{\tilde \pi_2} \ar[r]^{\tilde \pi_1} & \TV_2 \ar[d]^{\pi_2} \\
\TV_1 \ar[r]_{\pi_1} & V
}
\]
as in \S\ref{sec:splayed}. Let $E_2=\pi_2^{-1}(Z_2)$ be the exceptional divisor in $\TV_2$,
and let $E'_2=\tilde{\pi_1}^{-1}(E_2)=\tilde{\pi_2}^{-1}(\pi_1^{-1}(Z_2))$. By the birational
invariance of Segre classes,
\[
s(\pi_1^{-1}(Z_2),\TV_1)=\tilde\pi_{2*}\left(\frac{[E'_2]}{1+E'_2}\right)
=\tilde\pi_{2*}\tilde \pi_1^* \left(\frac{[E_2]}{1+E_2}\right)\quad.
\]
Since $Z_1$ and $Z_2$ are splayed, by Corollary~\ref{fiberint} this equals
\[
\pi_1^*\pi_{2*} \left(\frac{[E_2]}{1+E_2}\right) = \pi_1^* s(Z_2,V)
\]
as claimed.
\end{proof}

\begin{remark}\label{rem:beyondsplayed}
The equality stated in Lemma~\ref{pullback} does not hold in general: $V=\Abb^2$,
$Z_1=Z_2=$ the origin give a simple counterexample. It does hold whenever the
fiber product $\TV_1\times_V \TV_2$ is irreducible, as the arguments given above
show, and this may occur even if $Z_1$ and $Z_2$ are not splayed. For example,
if $Z_1$ is a hypersurface of $V$, then this condition is trivially satisfied regardless
of splayedness. For a more interesting example, two lines $Z_1$, $Z_2$ meeting 
at a point in $V=\Pbb^3$ are not splayed according to our definition, yet
$\TV_1\times_V \TV_2$ is irreducible (cf.~Remark \ref{rem:blowupirreducible}).
\qede\end{remark}

We will use Lemma~\ref{pullback} in the proof of the following more general
Segre class formula, which is the key technical result needed for the first proof of
Theorem~I.

Let $D_1$, $D_2$ be hypersurfaces in $V$, and let $Z_1\subseteq D_1$,
$Z_2\subseteq D_2$ be subschemes. At the level of ideal sheaves, we have
\[
\cI_{D_1,V}\subseteq \cI_1
\quad,\quad
\cI_{D_2,V}\subseteq \cI_2
\]
where $\cI_1=\cI_{Z_1,V}$, $\cI_2=\cI_{Z_2,V}$. We consider the subscheme $W$
of $V$ defined by the ideal sheaf
\[
\cI_{W,V}:=\cI_{D_1,V}\cdot \cI_2 + \cI_{D_2,V}\cdot \cI_1\quad.
\]
This subscheme is supported on $(D_1\cup Z_2)\cap (D_2\cup Z_1)=(D_1\cap D_2)
\cup (Z_1\cup Z_2)$, with a scheme structure depending subtly on $Z_1$ and $Z_2$. 
Under a splayedness assumption, we will obtain a relation between the Segre classes 
of $Z_1$, $Z_2$, and $W$. The relation is best expressed in terms of the following
notation: for $\iota: Z\subset V$ an embedding of schemes, let
\[
\shat (Z,V)=[V]-\iota_* s(Z,V)^\vee\quad.
\]
Here, the dual $(\cdot)^\vee$ changes the sign of components of odd codimension in $V$.

\begin{theorem}\label{thm:Segreclassrel}
Let $D_1$, $D_2$ be hypersurfaces of a smooth variety $V$, and $Z_1\subseteq D_1$,
$Z_2\subseteq D_2$, $W$ as above. Assume that $\{Z_1,D_1\}$ and $\{Z_2,D_2\}$
are splayed. Then
\begin{equation}\label{eq:Segreclassrel}
\frac{\shat(W,V)\otimes \cO(D_1+D_2)}{1+D_1+D_2}
=\left(\frac{\shat(Z_1,V)\otimes \cO(D_1)}{1+D_1}\right)\cdot
\left(\frac{\shat(Z_2,V)\otimes \cO(D_2)}{1+D_2}\right)
\end{equation}
in the Chow group of $V$.
\end{theorem}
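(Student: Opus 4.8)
The plan is to recast the operation appearing on both sides as a single push-forward from a blow-up, and then to exploit the product structure of $\TV_{12}$. Throughout, write $\mathcal F(Z,D):=\frac{\shat(Z,V)\otimes\cO(D)}{1+D}$ for a subscheme $Z\subseteq D\subseteq V$ with $D$ a hypersurface, so that the assertion becomes the multiplicativity statement $\mathcal F(W,D_1+D_2)=\mathcal F(Z_1,D_1)\cdot\mathcal F(Z_2,D_2)$. My first step would be the clean reformulation
\begin{equation}\label{eq:cleanF}
\mathcal F(Z,D)=g_*\!\left(\frac{[Y]}{1+F}\right),\qquad F:=g^*D-\mathcal E,
\end{equation}
valid for \emph{any} proper birational $g\colon Y\to V$ with $\cI_{Z,V}\cdot\cO_Y=\cO(-\mathcal E)$ invertible (so that $F$ is an effective divisor, since $Z\subseteq D$). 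To prove \eqref{eq:cleanF} I would use that, $g$ being birational, both the dual $(\cdot)^\vee$ and the twist $-\otimes g^*\cO(D)$ commute with $g_*$: from $s(Z,V)=g_*\frac{[\mathcal E]}{1+\mathcal E}$ one gets $\shat(Z,V)=g_*\frac{[Y]}{1-\mathcal E}$, the tensor operation sends $\frac{1}{1-\mathcal E}$ to $\frac{1+g^*D}{1+F}$ after twisting by $\cO(g^*D)$, and dividing by $1+D$ together with the projection formula clears the numerator.

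With \eqref{eq:cleanF} available I would compute on $\TV_{12}=\TV_1\times_V\TV_2$, which is legitimate by Corollary~\ref{splayedbu}. Put $\pi:=\pi_1\circ\tilde\pi_2\colon\TV_{12}\to V$; let $\mathcal E_i$ be the pull-back to $\TV_{12}$ of the exceptional divisor of $\pi_i$, so $\cI_i\cdot\cO_{\TV_{12}}=\cO(-\mathcal E_i)$, and set $F_i:=\pi^*D_i-\mathcal E_i$, noting that $F_1$ is pulled back from $\TV_1$ and $F_2$ from $\TV_2$. Since $\cI_{D_i}\cdot\cO_{\TV_{12}}=\cO(-\mathcal E_i-F_i)$, the definition $\cI_{W,V}=\cI_{D_1,V}\cdot\cI_2+\cI_{D_2,V}\cdot\cI_1$ yields
\[
\cI_{W,V}\cdot\cO_{\TV_{12}}=\cO(-\mathcal E_1-\mathcal E_2)\cdot\bigl(\cO(-F_1)+\cO(-F_2)\bigr).
\]
This is the point where splayedness is essential: $F_1$ and $F_2$ are cut out in the two complementary sets of coordinates, hence meet transversally, so $\mathcal R:=F_1\cap F_2$ is regularly embedded of codimension $2$ with normal bundle $\cO(F_1)\oplus\cO(F_2)$ and $\cO(-F_1)+\cO(-F_2)=\cI_{\mathcal R,\TV_{12}}$. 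Blowing up $\mathcal R$ gives $\rho\colon\widehat V\to\TV_{12}$ with exceptional divisor $\mathcal G$, on which $\cI_{W,V}$ becomes invertible; writing $q=\pi\circ\rho$, the residual divisor of \eqref{eq:cleanF} for $g=q$ comes out to $F_W=q^*(D_1+D_2)-\rho^*(\mathcal E_1+\mathcal E_2)-\mathcal G=\rho^*(F_1+F_2)-\mathcal G$.

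The computational core, and the step I expect to be the main obstacle, is then the Segre identity
\begin{equation}\label{eq:collapse}
\rho_*\!\left(\frac{[\widehat V]}{1+\rho^*(F_1+F_2)-\mathcal G}\right)=\frac{[\TV_{12}]}{(1+F_1)(1+F_2)}.
\end{equation}
I would prove it by expanding the left-hand side as $\sum_{k\ge0}(1+F_1+F_2)^{-(k+1)}\cap\rho_*\mathcal G^k$, inserting $\rho_*[\widehat V]=[\TV_{12}]$ and, for $k\ge1$, the push-forward formula $\rho_*\mathcal G^k=(-1)^{k-1}\,\iota_{\mathcal R*}\,s_{k-2}\bigl(\cO(F_1)\oplus\cO(F_2)\bigr)$ for the blow-up of the regular centre $\mathcal R$ (with $s_j$ the degree-$j$ Segre class of the normal bundle capped with $[\mathcal R]$, and $s_j=0$ for $j<0$). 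The resulting double series sums geometrically, and the simplifications $1+F_1+F_2-F_i=1+F_{3-i}$ make it telescope exactly to the right-hand side. This is delicate precisely because the twist by $\mathcal E_1+\mathcal E_2$, the self-intersections of $\mathcal G$, and the codimension-$2$ residual $\mathcal R$ must all cancel; equivalently, \eqref{eq:collapse} may be read as a special case of the residual-intersection formula $s=s(A,-)+c(\cO(A))^{-1}\cap\bigl(s(\mathcal R,-)\otimes\cO(A)\bigr)$ for Segre classes, applied with $A=\mathcal E_1+\mathcal E_2$.

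Granting \eqref{eq:cleanF} and \eqref{eq:collapse}, the conclusion is immediate. On $\TV_{12}$ the right-hand side of \eqref{eq:collapse} factors as $\tilde\pi_2^*\frac{[\TV_1]}{1+F_1}\cdot\tilde\pi_1^*\frac{[\TV_2]}{1+F_2}$, since $F_1,F_2$ are pulled back from the two factors. I would then push forward by $\pi$ using the factorization
\[
\pi_*\bigl(\tilde\pi_2^*a\cdot\tilde\pi_1^*b\bigr)=\pi_{1*}a\cdot\pi_{2*}b,
\]
which follows from the projection formula together with the identity $\tilde\pi_{2*}\tilde\pi_1^*=\pi_1^*\pi_{2*}$ of Corollary~\ref{fiberint} (so that splayedness re-enters through the fiber square). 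Combining,
\begin{align*}
\mathcal F(W,D_1+D_2)
&=q_*\frac{[\widehat V]}{1+F_W}
=\pi_*\frac{[\TV_{12}]}{(1+F_1)(1+F_2)}\\
&=\pi_{1*}\frac{[\TV_1]}{1+F_1}\cdot\pi_{2*}\frac{[\TV_2]}{1+F_2}
=\mathcal F(Z_1,D_1)\cdot\mathcal F(Z_2,D_2),
\end{align*}
where the first equality is \eqref{eq:cleanF} for $g=q$, the second is \eqref{eq:collapse}, and the last uses \eqref{eq:cleanF} for $g=\pi_1,\pi_2$. This is precisely \eqref{eq:Segreclassrel}.
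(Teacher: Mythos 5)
Your proposal is correct and follows essentially the same route as the paper's proof: the same tower of blow-ups (your $\widehat V$ is the paper's $\TV$ and your $\mathcal G$ its $\Til E$), the same observation that splayedness makes the third center a codimension-two complete intersection with normal bundle $\cO(F_1)\oplus\cO(F_2)$, the same appeal to the fiber-square identity $\tilde\pi_{2*}\tilde\pi_1^*=\pi_1^*\pi_{2*}$ of Corollary~\ref{fiberint}, and the same ``remarkable cancellation'' $(1+F_1)(1+F_2)-F_1F_2=1+F_1+F_2$. The one organizational improvement is your up-front reformulation $\frac{\shat(Z,V)\otimes\cO(D)}{1+D}=g_*\bigl(\frac{[Y]}{1+F}\bigr)$, which absorbs the exceptional divisors into the residual $F$ and lets you run the key push-forward in untwisted form and skip the final dualization, whereas the paper carries the twist by $\cO(E_1+E'_2)$ through the computation and dualizes at the end.
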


\begin{remark}
In this statement we use the notation introduced in~\S2 of~\cite{MR96d:14004}: 
if $\cL$ is a line bundle on $V$ and $A=\sum a^{(i)}$ is a class in the Chow group, 
where $a^{(i)}$ has {\em co}dimension~$i$ in $V$, then
\[
A\otimes \cL := \sum \frac{a^{(i)}}{c(\cL)^i}\quad.
\]
The formula given in Theorem~\ref{thm:Segreclassrel} is a good example of the 
usefulness of this notation: the formula (and its proof) would look unintelligibly 
complicated if it were written out without adopting this shorthand.
The notation satisfies simple properties, see Propositions~1 and~2 
in~\cite{MR96d:14004}; these will be used liberally in what
follows. It is also useful to observe that if $A$ and $B$ are classes in
$A_*V$, then 
\[
(A\cdot B)\otimes \cL = (A\otimes \cL)\cdot (B\otimes \cL)
\]
(this is evident from the definition).
\qede\end{remark}

\begin{proof}[Proof of Theorem~\ref{thm:Segreclassrel}]
We consider a sequence of three blow-ups over $V$:
\[
\xymatrix{
\TV \ar[r]^-{\tilde \pi} & \TV_{12} \ar[r]^-{\tilde \pi_2} & \TV_1 \ar[r]^-{\pi_1} & V\quad:
}
\]
the blow-up $\pi_1$ of $V$ along $Z_1$, with exceptional divisor $E_1$; 
the blow-up $\tilde \pi_2$ of $\TV_1$ along $\pi_1^{-1}(Z_2)$, with exceptional divisor
$E'_2$; and the blow-up $\tilde \pi$ of $\TV_{12}$ along the intersection of the residual
subschemes of $\tilde\pi_2^{-1}(E_1)$, resp., $E'_2$ in the inverse images of $D_1$,
resp.,~$D_2$. Note that under our splayedness hypothesis this last center is a 
complete intersection of codimension~$2$.
We let $\Til E$ be the exceptional divisor of $\tilde \pi$.
For notational convenience, we often use the same notation for an object
and for its inverse image to a variety in the sequence: for instance, $E_1$
will also denote its inverse image $\tilde \pi^{-1} \tilde \pi_2^{-1} E_1$ in 
$\TV$.
Finally, $\pi$ will denote the composition $\pi_1\circ \tilde\pi_2\circ \tilde \pi:
\TV\to \TV_{12}\to \TV_1\to V$.

\begin{claim}\label{cla:idealeq}
$\pi^{-1} (W) = E_1 \cup E'_2 \cup \Til E$.
\end{claim}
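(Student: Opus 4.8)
The assertion is an equality of ideal sheaves on $\TV$ (equivalently, of the Cartier divisor $\pi^{-1}(W)$ with $E_1+E'_2+\Til E$), so it may be checked locally analytically. The plan is to exploit splayedness to choose, at each point of the support of $W$, local analytic coordinates in which $V=V'\times V''$, with $D_1$, $Z_1$ pulled back from the first factor and $D_2$, $Z_2$ from the second. I would then write $\cI_{D_1,V}=(f)$ and $\cI_1=(a_1,\dots,a_m)$ with $f,a_i$ functions of the first set of variables (and $f\in\cI_1$, since $Z_1\subseteq D_1$), and $\cI_{D_2,V}=(g)$, $\cI_2=(b_1,\dots,b_n)$ with $g,b_j$ functions of the second set (and $g\in\cI_2$), so that $\cI_{W,V}=(f\,b_1,\dots,f\,b_n,\,g\,a_1,\dots,g\,a_m)$. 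Since blow-ups commute with analytic localization, it then suffices to verify $\cI_{W,V}\cdot\cO_{\TV}=\cO(-E_1-E'_2-\Til E)$ in these coordinates.

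Next I would push each generator through the three blow-ups, recording the exceptional factors; let $e_1$, $e'_2$, $\Til e$ denote local equations for $E_1$, $E'_2$, $\Til E$. The first blow-up principalizes $\cI_1$, so in $\TV$ one has $a_i=e_1\hat a_i$ with $(\hat a_1,\dots,\hat a_m)=\cO$, and, as $f\in\cI_1$, the residual $\Til f:=\pi_1^*f/e_1$ is regular, i.e.\ $f=e_1\Til f$. Because $f$ and the $a_i$ involve only the first set of variables while the second blow-up is centred in the second factor, these expressions acquire no factor of $e'_2$; symmetrically $g=e'_2\Til g$ and $b_j=e'_2\hat b_j$ with $(\hat b_1,\dots,\hat b_n)=\cO$, and $g,b_j$ are untouched by $\pi_1$. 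The third blow-up is centred at $V(\Til f,\Til g)$, which under splayedness is the product of the residual divisors $V(\Til f)$, $V(\Til g)$ and hence a codimension-$2$ complete intersection; therefore $(\Til f,\Til g)\cdot\cO_{\TV}=(\Til e)$, and writing $\Til f=\Til e\,\hat f$, $\Til g=\Til e\,\hat g$ yields $(\hat f,\hat g)=\cO$. One checks that $E_1=V(e_1)$ and $E'_2=V(e'_2)$ do not contain this centre, so $e_1$, $e'_2$ acquire no factor of $\Til e$.

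Assembling these factorizations, I would compute as ideals on $\TV$
\[
(\pi^*(f\,b_j))_j=(e_1\Til e\,\hat f)\cdot(e'_2)\cdot(\hat b_j)_j=(e_1 e'_2\Til e\,\hat f)
\]
and
\[
(\pi^*(g\,a_i))_i=(e'_2\Til e\,\hat g)\cdot(e_1)\cdot(\hat a_i)_i=(e_1 e'_2\Til e\,\hat g),
\]
using $(\hat a_i)_i=(\hat b_j)_j=\cO$. Summing, $\cI_{W,V}\cdot\cO_{\TV}=e_1 e'_2\Til e\,(\hat f,\hat g)=(e_1 e'_2\Til e)$, which is exactly $\cO(-E_1-E'_2-\Til E)$, proving the Claim.

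The main obstacle is the bookkeeping of the exceptional factors: one must confirm that each of $E_1$, $E'_2$, $\Til E$ occurs with multiplicity exactly one and that no spurious components appear. The crux is the identity $(\hat f,\hat g)=\cO$, which rests on the third centre being an honest codimension-$2$ complete intersection—a consequence of splayedness, which forces $\Til f$ and $\Til g$ to live in independent sets of variables. I would also separately treat the loci where the point lies in only some of $Z_1,Z_2,D_1,D_2$; there the relevant ideals degenerate to units and the corresponding exceptional divisors are empty, so the computation reduces to an easier special case of the one above.
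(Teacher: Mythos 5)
Your proof is correct and follows essentially the same route as the paper's: you track the pullback of $\cI_{W,V}$ through the three blow-ups, factor out $E_1$ and $E'_2$ from the transforms of $\cI_{D_1,V}\cdot\cI_2$ and $\cI_{D_2,V}\cdot\cI_1$, and observe that the remaining factor $(\Til f,\Til g)$ is exactly the ideal of the center of the third blow-up, hence is principalized to $\cI_{\Til E}$. The only difference is cosmetic: you work with local analytic equations where the paper manipulates the ideal sheaves directly (your $\Til f$, $\Til g$ being local equations of the residuals $\Til D_1$, $\Til D_2$), and your appeal to the complete-intersection property for $(\Til f,\Til g)\cdot\cO_{\TV}=(\Til e)$ is not actually needed, since any blow-up principalizes its center.
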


The statement of this claim is that the ideal of $W$ pulls back to the product of
the ideals of (the inverse images of) $E_1$, $E'_2$, and $\Til E$ in $\TV$. 
In $\TV_1$,
\begin{align*}
\cI_{W,V}\cdot \cO_{\TV_1} &= \cI_{\Til D_1,\TV_1} \cI_{E_1,\TV_1} \cI_{\pi_1^{-1}(Z_2),
\TV_1} +\cI_{\pi_1^{-1}(D_2),\TV_1} \cdot \cI_{E_1,\TV_1} \\
&=\cI_{E_1,\TV_1}\cdot
(\cI_{\Til D_1,\TV_1} \cI_{\pi_1^{-1}(Z_2),\TV_1} +\cI_{\pi_1^{-1}(D_2),\TV_1})\quad,
\end{align*}
where $\Til D_1$ is the residual of $E_1$ in $\pi_1^{-1}(D_1)$. In $\TV_{12}$,
\[
\cI_{W,V}\cdot \cO_{\TV_{12}} =\cI_{\tilde \pi_2^{-1}(E_1),\TV_{12}}\cdot \cI_{E'_2,\TV_{12}} 
\cdot (\cI_{\tilde\pi_2^{-1}(\Til D_1),\TV_{12}} +\cI_{ \Til D_2,\TV_{12}})\quad,
\]
where $\Til D_2$ is the residual of $E'_2$ in the inverse image of $D_2$.
The ideal $\cI_{\tilde\pi_2^{-1}(\Til D_1),\TV_{12}} +\cI_{\Til D_2,\TV_{12}}$
defines the center of the third blow-up, so this shows that
\[
\cI_{W,V}\cdot \cO_{\TV} 
=\cI_{\tilde \pi^{-1} \tilde \pi_2^{-1}(E_1),\TV}\cdot \cI_{\tilde\pi^{-1}(E'_2),\TV} 
\cdot \cI_{\Til E,\TV}
\] 
as claimed. 

By the birational invariance of Segre classes,
\[
s(W,V) = \pi_* \frac{[E_1]+[E'_2]+[\Til E]}{1+ E_1 + E'_2 + \Til E}\quad,
\]
and therefore
\[
[V]-s(W,V) = \pi_* \left(\frac{1}{1+ E_1 + E'_2 + \Til E}\cap [\TV]\right)\quad.
\]
Using the $\otimes$ notation recalled after the statement of the proposition,
\begin{align*}
\frac 1{1+E_1+E'_2+\Til E}\cap [\TV]
&=\left(\frac{1-E_1-E'_2}{1+\Til E}\cap [\TV]\right) \otimes \cO(E_1+E'_2) \\
&=\left((1-E_1-E'_2)\left(1-\frac{\Til E}{1+\Til E}\right)\cap [\TV]\right)\otimes 
\cO(E_1+E'_2) \\
&=\frac 1{1+E_1+E'_2}\cap \left([\TV] - \frac{[\Til E]}{1+\Til E}
\otimes \cO(E_1+E'_2)\right)\quad.
\end{align*}
The term $[\Til E]/(1+\Til E)$ pushes forward to the Segre class of the center
of the third blow-up, which is the intersection of the (inverse images of the) 
residual of $E_1$ in $D_1$, with class $D_1-E_1$, and of the residual of 
$E'_2$ in $D_2$, with class $D_2-E'_2$. The intersection is regularly embedded 
in $\TV_{12}$, as noted earlier, and its Segre class equals the inverse Chern 
class of its normal bundle (by the third property of Segre classes recalled above):
\[
\tilde \pi_* \left(\frac{\Til E}{1+\Til E}\right) = \frac{[D_1-E_1]\cdot [D_2-E'_2]}
{(1+D_1-E_1)(1+D_2-E'_2)}\quad,
\]
where evident pull-backs are omitted for notational simplicity.
Using this fact, properties of the $\otimes$ notation, and the projection formula,
\begin{multline*}
\tilde\pi_* \left(
\frac 1{1+E_1+E'_2+\Til E}\cap [\TV]
\right) \\
=\frac 1{1+E_1+E'_2}\cap \left([\TV_{12}] - \frac{[D_1-E_1]\cdot [D_2-E'_2]}
{(1+D_1-E_1)(1+D_2-E'_2)}\otimes \cO(E_1+E'_2)\right) \\
=\frac 1{1+E_1+E'_2}\cap \left([\TV_{12}] - \frac{[D_1-E_1]\cdot [D_2-E'_2]}
{(1+D_1+E'_2)(1+D_2+E_1)}\right)
\end{multline*}

A remarkable cancellation (and again the projection formula) now gives
\begin{multline*}
\tilde\pi_* \left(\frac{1}{(1+D_1+D_2)(1+ E_1 + E'_2 + \Til E)}\cap [\TV]\right) \\
=\frac 1{(1+D_1+D_2)(1+E_1+E'_2)}\cap \left(1 - \frac{(D_1-E_1)\cdot (D_2-E'_2)}
{(1+D_1+E'_2)(1+D_2+E_1)}\right)\cap [\TV_{12}] \\
=\frac{[\TV_{12}]}{(1+D_1+E'_2)(1+D_2+E_1)}\quad.
\end{multline*}
Summarizing, we have shown that
\begin{equation}\label{eq:intermed}
\frac{[V]-s(W,V)}{1+D_1+D_2} = \pi_{1*}\tilde\pi_{2*} 
\left(\frac{[\TV_{12}]}{(1+D_1+E'_2)(1+D_2+E_1)}\right)\quad.
\end{equation}
In order to evaluate the right-hand side, note that
\[
\frac{[\TV_{12}]}{1+D_1+E'_2} 
=\frac 1{1+D_1}\left(\frac {[\TV_{12}]}{1+E'_2}\otimes \cO(D_1)\right)
= \frac 1{1+D_1}\left([\TV_{12}] - \frac {[E'_2]}{1+E'_2}\otimes \cO(D_1)\right)
\]
Pushing this forward by $\tilde\pi_2$ shows that
\[
\tilde\pi_{2*}\left( \frac{[\TV_{12}]}{1+D_1+E'_2}\right)
= \frac {[\TV_1] - s(\pi_1^{-1} Z_2,\TV_1)\otimes \cO(D_1)}{1+D_1}\quad.
\]
Since $Z_1$ and $Z_2$ are splayed, by Lemma~\ref{pullback} this may be rewritten as
\[
\tilde\pi_{2*}\left( \frac{[\TV_{12}]}{1+D_1+E'_2}\right)
= \pi_1^*\left( \frac {[V] - s(Z_2,V)\otimes \cO(D_1)}{1+D_1}\right)\quad.
\]
By the projection formula and \eqref{eq:intermed} we have
\[
\frac{[V]-s(W,V)}{1+D_1+D_2} = \pi_{1*} \left(\frac{[\TV_1]}{1+D_2+E_1}\right)\cdot
\frac {[V] - s(Z_2,V)\otimes \cO(D_1)}{1+D_1}\quad.
\]
The last push-forward is handled similarly to the previous one, giving
\[
\pi_{1*} \left(\frac{[\TV_1]}{1+D_2+E_1}\right)
= \frac {[V] - s(Z_1,V)\otimes \cO(D_2)}{1+D_2}\quad.
\]
Therefore,
\[
\frac{[V]-s(W,V)}{1+D_1+D_2} = \frac {([V] - s(Z_1,V))\otimes \cO(D_2)}{1+D_2}
\cdot \frac {([V] - s(Z_2,V))\otimes \cO(D_1)}{1+D_1}\quad.
\]
The stated formula follows from this by taking duals and tensoring by 
$\cO(D_1+D_2)$.
\end{proof}

The argument shows that the formula in Theorem~\ref{thm:Segreclassrel} holds as
soon as $\TV_1\times_V \TV_2$ is irreducible (cf.~Remark~\ref{rem:beyondsplayed})
and the residuals of $E_1$ in $\pi_1^{-1}(D_1)$ and $E'_2$ in 
$\tilde\pi_2^{-1}\pi_1^{-1}(D_2)$ have no common components. While we focus on
splayedness in this paper, the formula in Theorem~\ref{thm:Segreclassrel}
has a substantially more general scope.

\begin{example}
Let $Z_1$, $Z_2$ be two lines in $V=\Pbb^3$ intersecting at a point. Then $Z_1$ and $Z_2$
are not splayed according to our definition, but $\TV_1\times_V \TV_2$ is irreducible
(Remark~\ref{rem:blowupirreducible}). Choosing coordinates $(x_0: \ldots:x_3)$, we may 
assume that $Z_1$ has the ideal $(x_0,x_1)$ and $Z_2$ has the ideal $(x_0,x_2)$. 
Then $Z_i$ is contained in $D_i=\{x_i=0\}$; a computation shows that the relevant residuals
have no common components.
A direct computation of Segre classes, which may for example be carried out 
using~\cite{MR1956868}, confirms that formula (\ref{eq:Segreclassrel}) does hold.
\qede\end{example}

\begin{example}
If $Z_1=Z_2=\emptyset$, then $W=D_1\cap D_2$.
Assume that $D_1$ and $D_2$ have no common components, so that $W$  
is a codimension~$2$ local complete intersection with normal bundle
$\cO(D_1)\oplus \cO(D_2)$. 
This is of course the case if $D_1$ and $D_2$ are splayed, and considerably more
generally. We have
\begin{align*}
\shat(W,V) &=[V]-\left(
\frac{D_1\cdot D_2}{(1+D_1)(1+D_2)}\cap [V]
\right)^\vee 
=\left(1-\frac{D_1\cdot D_2}{(1-D_1)(1-D_2)}\right)\cap [V] \\
&=\frac{ 1-D_1-D_2}{(1-D_1)(1-D_2)}\cap [V]
\end{align*}
and hence
\[
\shat(W,V) \otimes \cO(D_1+D_2) =\frac{1+D_1+D_2}{(1+D_1)(1+D_2)}\cap [V]
\]
(use Proposition~1 from \cite{MR96d:14004}).
Formula~\eqref{eq:Segreclassrel} follows immediately in this case. 

The reader is encouraged to consider the opposite extreme $Z_1=D_1$, $Z_2=D_2$,
and verify that \eqref{eq:Segreclassrel} reduces to $[V]=[V]\cdot [V]$ in this case (regardless
of splayedness).
\qede\end{example}

\subsection{Chern classes of hypersurface complements}
For a rapid review of {\em Chern-Schwartz-MacPherson\/} ($\csm$) classes, we 
address the reader to \S3.1 of \cite{MR3106736} and references therein. Briefly,
every locally closed subset $U$ of a complete variety $V$ determines a class
$\csm(U)$ in the Chow group of $V$, such that if $U=Z$ is a nonsingular closed
subvariety, then $\csm(Z)$ equals the push-forward to $V$ of the total Chern
class of the tangent bundle to $Z$. This notion is functorial in a strong sense,
and satisfies an inclusion-exclusion property: if $U_1$, $U_2$ are locally 
closed in $V$, then
\[
\csm(U_1\cup U_2) = \csm(U_1)+\csm(U_2) - \csm(U_1\cap U_2)\quad.
\]
The classes arose in seminal work of Marie-H\'el\`ene Schwartz (\cite{MR35:3707}, 
\cite{MR32:1727}) and Robert MacPherson (\cite{MR0361141}). See Example~19.1.7 
in~\cite{85k:14004} for an efficient statement of MacPherson's definition and result.

We will use the following formula computing the $\csm$ class of a hypersurface $D$ in
a nonsingular variety $V$ in terms of the Segre class of the {\em singularity 
subscheme\/}~$JD$, locally defined (as a subscheme of $D$) by the partial derivatives 
of a local equation for~$D$.

\begin{lemma}[\cite{MR2001i:14009}, Theorem~I.4]\label{lem:csmfromsegre}
Let $D$ be a hypersurface of a nonsingular variety $V$, with singularity subscheme $JD$. 
Then
\begin{equation}\label{eq:csmfromsegre}
\csm(D)= c(TV)\cap \left(s(D,V) + c(\cO(D))^{-1}\cap (s(JD,V)^\vee \otimes
\cO(D))\right)\quad.
\end{equation}
\end{lemma}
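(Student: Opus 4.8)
The plan is to avoid MacPherson's definition directly and instead compute $\csm(D)$ via its complement together with an embedded resolution. By inclusion–exclusion applied to $V=D\sqcup(V\setminus D)$ one has $\csm(D)=c(TV)\cap[V]-\csm(V\setminus D)$, so it suffices to compute $\csm(\one_{V\setminus D})$. First I would choose an embedded resolution $\pi\colon \TV\to V$ of $D$: a proper birational morphism, an isomorphism over $V\setminus\operatorname{Sing}(D)$, with $\pi^{-1}(D)_{\mathrm{red}}$ a simple normal crossings divisor whose components $E_i$ appear with multiplicities $m_i$ in the total transform $\pi^*D=\sum_i m_iE_i$. Since $\pi$ restricts to an isomorphism $\TV\setminus\pi^{-1}(D)\to V\setminus D$ and $\csm$ is functorial under proper maps, $\csm(V\setminus D)=\pi_*\csm(\TV\setminus\pi^{-1}(D))$; and for the complement of a simple normal crossings divisor the class is that of the logarithmic tangent bundle, so
\[
\csm(V\setminus D)=\pi_*\left(\frac{c(T\TV)}{\prod_i(1+E_i)}\cap[\TV]\right).
\]
This already reduces the left-hand side to an explicit pushforward from $\TV$, and the case $D$ nonsingular (where $JD=\emptyset$, $s(JD,V)=0$) recovers $\csm(D)=c(TV)\cap s(D,V)=c(TD)\cap[D]$, fixing all signs and normalizations.

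The main obstacle is then to convert the resolution data $(E_i,m_i)$ into the Segre class $s(JD,V)$ of the Jacobian subscheme, i.e.\ to identify
\[
c(TV)\cap[V]-\pi_*\left(\frac{c(T\TV)}{\prod_i(1+E_i)}\cap[\TV]\right)
\]
with the right-hand side of~\eqref{eq:csmfromsegre}. The natural bridge is the principalization of the Jacobian ideal $\cI_{JD}$: blowing up $V$ along $JD$ computes $s(JD,V)$ as the pushforward of $\tfrac{[\mathcal G]}{1+\mathcal G}$ from the exceptional divisor $\mathcal G$, and one arranges $\pi$ to factor through $\operatorname{Bl}_{JD}V$ so that both pushforwards live over a common model. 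I expect the hard technical core to be the bookkeeping in this matching: the multiplicities $m_i$ enter the pullback of $\cI_{JD}$ along the exceptional $E_i$ (with local exponents $m_i-1$), and one must check that summing the logarithmic contributions against the exceptional contributions produces \emph{exactly} the combination $c(\cO(D))^{-1}\cap(s(JD,V)^\vee\otimes\cO(D))$, and no other twist — getting the dual $(\cdot)^\vee$ and the two different roles of $\cO(D)$ right is the delicate point.

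To see why precisely these ingredients must appear, and to organize (or independently carry out) the matching, I would invoke the microlocal viewpoint, since it makes $JD$, the twist by $\cO(D)$, and the duality all forced rather than coincidental. The class $\csm(\one_D)$ is determined by the characteristic (conormal) cycle of $\one_D$ inside the projectivized cotangent bundle $\Pbb(T^*V)$, whose essential part is the conormal variety of $D$. If $f$ is a local equation for $D$, then $df$ is a section of $\Omega^1_V\otimes\cO(D)$, and the conormal variety is the closure of the image of the gradient map $x\mapsto[df(x)]$; its indeterminacy locus on $D$ is exactly the common zero scheme of the partials, namely $JD$. Hence the conormal variety is dominated by the blow-up of $V$ along $JD$ — which is what makes $s(JD,V)$ appear — the twist by $\cO(D)$ is imposed by $df\in\Omega^1_V\otimes\cO(D)$, and the dual $(\cdot)^\vee$ records the passage between $TV$ and $T^*V$. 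Running the pushforward/index formula for characteristic cycles on this model should yield~\eqref{eq:csmfromsegre} directly; at a minimum it serves as a reliable guide for the sign-and-twist matching flagged above, which is where I anticipate essentially all of the genuine difficulty to lie.
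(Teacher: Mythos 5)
First, note that the paper does not prove this lemma at all: it is imported verbatim as Theorem~I.4 of \cite{MR2001i:14009}, so the only ``proof'' in the paper is that citation, and your attempt has to be measured against the argument in that reference. Your outline does track the two standard routes to the result (resolution of singularities plus the logarithmic tangent bundle of the normal crossings model, and the characteristic-cycle/conormal-variety picture via the section $[df]$ of a twist of $\Omega^1_V$), and the preliminary reductions you make are sound: inclusion--exclusion to pass to $V\smallsetminus D$, functoriality under the proper map $\pi$, the identity $\csm(V\smallsetminus D)=\pi_*\bigl(c(T\TV)/\prod_i(1+E_i)\cap[\TV]\bigr)$ for an SNC resolution (itself a nontrivial fact needing proof or citation), and the check in the nonsingular case.

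The genuine gap is that the entire content of the theorem is the identification you defer to ``bookkeeping,'' namely showing that
\[
c(TV)\cap[V]-\pi_*\Bigl(\frac{c(T\TV)}{\prod_i(1+E_i)}\cap[\TV]\Bigr)
\]
equals the right-hand side of~\eqref{eq:csmfromsegre}; nothing in your proposal actually performs this step. Moreover, the one concrete mechanism you propose for it is too optimistic: the total transform $\pi^{-1}\cI_{JD}\cdot\cO_{\TV}$ is \emph{not} in general the ideal of the divisor $\sum_i(m_i-1)E_i$, so the multiplicities $m_i$ of the resolution do not by themselves determine $s(JD,V)$, and one cannot simply ``arrange $\pi$ to factor through $\operatorname{Bl}_{JD}V$'' and read off the answer. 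This is precisely why the proof in \cite{MR2001i:14009} does not principalize $JD$ through the resolution; instead it studies the blow-up of $V$ along $JD$ (equivalently the closure of the graph of $x\mapsto[df(x)]$), introduces an auxiliary class ($\mu$-class) built from $s(JD,V)$, establishes how both sides of the formula transform under a single blow-up along a nonsingular center contained in $D$, and then descends along a resolution; the twist by $\cO(D)$ and the dual $(\cdot)^\vee$ are forced by that blow-up formula, not merely suggested by the microlocal heuristic. Your final paragraph correctly explains \emph{why} $JD$, $\cO(D)$ and the dual should appear, but invoking ``the pushforward/index formula for characteristic cycles'' without stating it or verifying the local Euler obstruction bookkeeping leaves the proof unfinished at exactly the point where all the difficulty lives.
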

This statement again uses the operations $\cdot^\vee$, $\otimes$ employed in 
\S\ref{sec:Segreclassformula}. In terms of the notation introduced before the 
statement of Theorem~\ref{thm:Segreclassrel}, \eqref{eq:csmfromsegre}
is equivalent to
\begin{equation}\label{eq:csmfromshat}
\csm(V\smallsetminus D)=c(TV)\cap \frac{\shat(JD,V)\otimes \cO(D)}{1+D}\quad.
\end{equation}

Now suppose that $D_1$ and $D_2$ are splayed divisors in $V$, and let $D=D_1\cup D_2$.
Note that this implies that $\{D_1,JD_1\}$ and $\{D_2,JD_2\}$ are splayed.
It is clear set-theoretically that $JD$ is supported on $(D_1\cap D_2)\cup (JD_1 \cup JD_2)$.
The splayedness condition implies that the scheme structure of $JD$ on this union 
matches the one studied in \S\ref{sec:Segreclassformula} {\it vis-a-vis\/} $W$, $Z_1$, $Z_2$.

\begin{lemma}\label{lem:faber}
With notation as above, $D_1$ and $D_2$ are splayed if and only if
\[
\cI_{JD,V}=\cI_{D_1,V}\cdot \cI_{JD_2,V} + \cI_{D_2,V}\cdot \cI_{JD_1,V}\quad.
\]
\end{lemma}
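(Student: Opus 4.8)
The plan is to verify the asserted identity of ideal sheaves stalk by stalk, which reduces the statement to a local analytic computation at each point $p$ of $D_1\cap D_2$. Away from the intersection one of $f_1$, $f_2$ is a unit and both sides collapse to $\cI_{JD_1,V}$ or $\cI_{JD_2,V}$, in agreement with $\cI_{JD,V}$, while the splayedness condition is vacuous there; so only points $p\in D_1\cap D_2$ carry content. Fixing local equations $f_1$, $f_2\in\cO_{V,p}$ for $D_1$, $D_2$, the product $f=f_1f_2$ is a local equation for $D$ and
\[
\cI_{JD,V}=(f,\partial_1 f,\dots,\partial_n f),\qquad \partial_i f=f_1\,\partial_i f_2+f_2\,\partial_i f_1,
\]
whereas $\cI_{D_1,V}\cdot\cI_{JD_2,V}+\cI_{D_2,V}\cdot\cI_{JD_1,V}=(f,\ f_1\partial_i f_2,\ f_2\partial_j f_1)$. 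The inclusion $\cI_{JD,V}\subseteq\cI_{D_1,V}\cdot\cI_{JD_2,V}+\cI_{D_2,V}\cdot\cI_{JD_1,V}$ is automatic, since each $\partial_i f$ is visibly a sum of two generators of the right-hand side; the entire content of the lemma is the reverse inclusion, i.e.\ whether the ``cross terms'' $f_1\partial_i f_2$ and $f_2\partial_j f_1$ lie in the Jacobian ideal of $f_1f_2$.

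For the implication that splayedness yields the identity, I would choose analytic coordinates $(x_1,\dots,x_r,y_1,\dots,y_s)$ at $p$ realizing the splaying, so that $f_1$ involves only the $x_i$ and $f_2$ only the $y_j$. Then $\partial_{x_i}f_2=0$ and $\partial_{y_j}f_1=0$, so that the only nonzero partials of $f$ are $f_2\,\partial_{x_i}f_1$ and $f_1\,\partial_{y_j}f_2$, while the cross terms distinguishing the two sides simply vanish. Consequently the right-hand side collapses onto $(f,\ f_2\partial_{x_i}f_1,\ f_1\partial_{y_j}f_2)=\cI_{JD,V}$, giving the equality. This is the direction actually used to identify $JD$ with the scheme $W$ of \S\ref{sec:Segreclassformula} (taking $Z_1=JD_1$, $Z_2=JD_2$), and it is a direct one-line computation once splayed coordinates are in hand.

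For the converse I would argue that the identity forces $f_1\partial_i f_2\in\cI_{JD,V}$ and $f_2\partial_j f_1\in\cI_{JD,V}$ for all $i,j$. Writing out one such membership relation and using that $f_1$ and $f_2$ share no common factor, one extracts a derivation $\theta=\sum_k b_k\,\partial_k$ that is tangent to $D_1$ (that is, $\theta(f_1)\in(f_1)$) yet recovers the normal direction $\partial_i f_2$ of $D_2$ modulo $f_2$; ranging over all $i$, and symmetrically over the companion relations, produces enough logarithmic vector fields for $D=D_1\cup D_2$ to straighten coordinates and separate $f_1$ from $f_2$. Rather than reproduce this integration argument, I would invoke the algebraic splayedness criterion of the second author, \cite{Faber12} (see also \cite{MR3106736}), which is exactly the equivalence packaged here.

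The main obstacle is precisely this converse: the forward implication and the stalk-wise reduction are routine, but extracting an analytic product structure from the purely algebraic ideal identity is the substantive point, and I would rely on the cited characterization to supply it. Since only the forward implication is needed for the hypersurface case of Theorem~I, the converse could, if one prefers to keep the note self-contained, be relegated to a remark carrying the reference.
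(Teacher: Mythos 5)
Your proposal is correct and in substance matches the paper, which offers no independent proof of this lemma but simply records it as a restatement of Corollary~2.6 of \cite{MR3106736}: your stalk-wise computation in splayed coordinates (cross terms $f_1\partial_{x_i}f_2$ and $f_2\partial_{y_j}f_1$ vanishing) correctly establishes the ``only if'' direction, which is the only one used later, and like the paper you defer the substantive converse to the cited splayedness criterion of \cite{Faber12} and \cite{MR3106736}. Nothing further is needed.
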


This is a restatement of Corollary~2.6 in~\cite{MR3106736}. Only the `only if' part
will be needed here.

\begin{corol}\label{cor:shatcomplement}
Let $D_1$, $D_2$ be splayed hypersurfaces in a smooth variety $V$, and let
$D$ be $D_1\cup D_2$. Then
\[
\frac{\shat(JD,V)\otimes \cO(D)}{1+D}
=\left(\frac{\shat(JD_1,V)\otimes \cO(D_1)}{1+D_1}\right)\cdot
\left(\frac{\shat(JD_2,V)\otimes \cO(D_2)}{1+D_2}\right)
\]
in $A_*V$.
\end{corol}

\begin{proof}
This follows from Lemma~\ref{lem:faber} and Theorem~\ref{thm:Segreclassrel},
since if $D_1$ and $D_2$ are splayed, then so are $\{D_1,JD_1\}$ and $\{D_2,JD_2\}$.
\end{proof}

\begin{corol}\label{cor:csmcomplement}
Let $D_1$, $D_2$ be splayed hypersurfaces in a smooth variety $V$, and let
$D$ be $D_1\cup D_2$. Then
\begin{equation}\label{eq:complementrel}
c(TV)\cap \csm(V\smallsetminus D) = \csm(V\smallsetminus D_1)\cdot 
\csm(V\smallsetminus D_2)\quad.
\end{equation}
\end{corol}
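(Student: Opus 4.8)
The plan is to derive \eqref{eq:complementrel} directly from Corollary~\ref{cor:shatcomplement} by invoking the $\csm$--Segre formula \eqref{eq:csmfromshat} three times, once for each of the hypersurfaces $D_1$, $D_2$, and $D=D_1\cup D_2$. The key observation is that the right-hand sides of \eqref{eq:csmfromshat} are, up to the ubiquitous factor $c(TV)$, exactly the three terms appearing in the Segre-class identity of Corollary~\ref{cor:shatcomplement}.

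Concretely, I would first rewrite each complement class using \eqref{eq:csmfromshat}:
\[
\csm(V\smallsetminus D_i)=c(TV)\cap \frac{\shat(JD_i,V)\otimes \cO(D_i)}{1+D_i}
\qquad(i=1,2),
\]
and likewise
\[
\csm(V\smallsetminus D)=c(TV)\cap \frac{\shat(JD,V)\otimes \cO(D)}{1+D},
\]
where $D=D_1\cup D_2$ so that $\cO(D)=\cO(D_1+D_2)$. Next I would substitute the identity of Corollary~\ref{cor:shatcomplement} into the expression for $\csm(V\smallsetminus D)$, obtaining
\[
c(TV)\cap \csm(V\smallsetminus D)
=c(TV)^2\cap\left(\frac{\shat(JD_1,V)\otimes \cO(D_1)}{1+D_1}\right)\cdot
\left(\frac{\shat(JD_2,V)\otimes \cO(D_2)}{1+D_2}\right).
\]
Finally I would recognize each of the two bracketed factors as $c(TV)^{-1}\cap\csm(V\smallsetminus D_i)$, so that the right-hand side equals $\csm(V\smallsetminus D_1)\cdot \csm(V\smallsetminus D_2)$, which is precisely \eqref{eq:complementrel}.

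The computation is essentially a bookkeeping exercise once the two ingredients are in place, so there is no serious obstacle at this level; the real content has already been isolated in Theorem~\ref{thm:Segreclassrel} and Lemma~\ref{lem:faber} (which together yield Corollary~\ref{cor:shatcomplement}) and in the $\csm$ formula \eqref{eq:csmfromshat}. The only point demanding a moment of care is the verification that the scheme structures match---that is, that the subscheme $W$ of Theorem~\ref{thm:Segreclassrel}, instantiated with $Z_i=JD_i$, really coincides with $JD$---but this is exactly the content of Lemma~\ref{lem:faber}, and the splayedness of $\{D_1,JD_1\}$ and $\{D_2,JD_2\}$ (noted just before Lemma~\ref{lem:faber}) supplies the hypothesis needed to apply Theorem~\ref{thm:Segreclassrel}. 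Thus the proof reduces to chaining these two previously established identities together with the tensor-factorization property $(A\cdot B)\otimes\cL=(A\otimes\cL)\cdot(B\otimes\cL)$ of the $\otimes$ notation, which guarantees the bundle twists distribute correctly across the product.
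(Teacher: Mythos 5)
Your proposal is correct and is essentially identical to the paper's proof, which likewise deduces the statement by combining Corollary~\ref{cor:shatcomplement} with the hypersurface formula \eqref{eq:csmfromshat} for each of $D_1$, $D_2$, and $D$; you have simply written out the bookkeeping that the paper leaves implicit. Your closing remark correctly locates the real content in Lemma~\ref{lem:faber} and Theorem~\ref{thm:Segreclassrel}, which is exactly where the paper places it.
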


\begin{proof}
This follows from Corollary~\ref{cor:shatcomplement} and \eqref{eq:csmfromshat}.
\end{proof}

Formula~\eqref{eq:complementrel} was proposed in \cite{MR3106736}, where it
was observed that under a strong freeness assumption on the divisors it follows from 
an analogous formula for Chern classes of sheaves of logarithmic differentials  
(Proposition~3.2 in \cite{MR3106736}). Several other particular instances of the formula 
are studied in~\S3 of~\cite{MR3106736}. Corollary~\ref{cor:csmcomplement} proves
the formula without extraneous assumptions.

For splayed divisors, Theorem~I follows immediately from Corollary~\ref{cor:csmcomplement}
and the inclusion-exclusion property of $\csm$ classes.

\begin{theorem}[Theorem~I, hypersurface case]\label{thm:hypcase}

Let $D_1$, $D_2$ be splayed divisors in a smooth variety~$V$. Then
\[
\csm(D_1)\cdot \csm(D_2) = c(TV)\cap \csm(D_1\cap D_2)
\]
in $A_*V$.
\end{theorem}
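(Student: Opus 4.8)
The plan is to deduce Theorem~\ref{thm:hypcase} directly from Corollary~\ref{cor:csmcomplement}, using the inclusion-exclusion property of Chern-Schwartz-MacPherson classes together with the defining relation for $\csm$ classes of complements. The key observation is that the Corollary already gives the multiplicativity of the class $c(TV)^{-1}\cap\csm(V\smallsetminus D_i)$ for the open complements; converting this into the desired statement about the closed subschemes $D_1$, $D_2$, and their intersection is a purely formal manipulation once the right inclusion-exclusion identities are in place.

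First I would record the elementary complement relations. Writing $D=D_1\cup D_2$, functoriality and additivity of $\csm$ give $\csm(V)=\csm(V\smallsetminus D_i)+\csm(D_i)$ for $i=1,2$, and likewise $\csm(V)=\csm(V\smallsetminus D)+\csm(D)$. Since $D=D_1\cup D_2$, the inclusion-exclusion property yields $\csm(D)=\csm(D_1)+\csm(D_2)-\csm(D_1\cap D_2)$, so that
\[
\csm(V\smallsetminus D)=\csm(V)-\csm(D_1)-\csm(D_2)+\csm(D_1\cap D_2)\quad.
\]
In particular each of $\csm(D_1)$, $\csm(D_2)$, $\csm(D_1\cap D_2)$ can be expressed through the complement classes $\csm(V\smallsetminus D_1)$, $\csm(V\smallsetminus D_2)$, $\csm(V\smallsetminus D)$ and the ambient class $\csm(V)=c(TV)\cap[V]$.

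Next I would substitute these expressions into the identity of Corollary~\ref{cor:csmcomplement}, namely $c(TV)\cap\csm(V\smallsetminus D)=\csm(V\smallsetminus D_1)\cdot\csm(V\smallsetminus D_2)$. Setting $\gamma_i:=\csm(V\smallsetminus D_i)$ and $\gamma:=\csm(V\smallsetminus D)$, one has $\csm(D_i)=\csm(V)-\gamma_i$ and $\csm(D_1\cap D_2)=\gamma-\csm(V)+\gamma_1+\gamma_2$. The goal relation $\csm(D_1)\cdot\csm(D_2)=c(TV)\cap\csm(D_1\cap D_2)$ then becomes a polynomial identity in $\gamma_1$, $\gamma_2$, $\gamma$ and $\csm(V)=c(TV)\cap[V]$. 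Expanding the left side gives $(\csm(V)-\gamma_1)\cdot(\csm(V)-\gamma_2)=\csm(V)^2-\csm(V)\cdot\gamma_1-\csm(V)\cdot\gamma_2+\gamma_1\cdot\gamma_2$; the right side, using the Corollary to replace $c(TV)\cap\gamma$ by $\gamma_1\cdot\gamma_2$, becomes $c(TV)\cap\gamma_1+c(TV)\cap\gamma_2+\gamma_1\cdot\gamma_2-c(TV)\cap\csm(V)$. The terms $\gamma_1\cdot\gamma_2$ cancel on both sides, and since $c(TV)\cap\csm(V\smallsetminus D_i)=c(TV)\cap(\csm(V)-\csm(D_i))$, while $\csm(V)\cdot\gamma_i=(c(TV)\cap[V])\cdot\gamma_i=c(TV)\cap\gamma_i$ by the projection formula, the remaining terms match, yielding the theorem.

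The only point requiring genuine care, rather than bookkeeping, is the reconciliation of the two distinct products appearing in the statement: the intersection product $\csm(D_1)\cdot\csm(D_2)$ in $A_*V$ on one side, and the cap product $c(TV)\cap(\cdot)$ on the other. The reduction above works smoothly precisely because $\csm(V)=c(TV)\cap[V]$, so that intersecting with $\csm(V)$ and capping with $c(TV)$ agree, via the projection formula, on any class pushed forward to $V$. I expect this compatibility to be the main thing to verify explicitly; everything else is the formal algebra of inclusion-exclusion. Thus the theorem follows immediately from Corollary~\ref{cor:csmcomplement} and the inclusion-exclusion property of $\csm$ classes, exactly as the text anticipates.
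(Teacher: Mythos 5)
Your strategy is exactly the one the paper uses: express $\csm(D_1)$, $\csm(D_2)$, $\csm(D_1\cap D_2)$ in terms of the complement classes, invoke Corollary~\ref{cor:csmcomplement} to replace $\csm(V\smallsetminus D_1)\cdot\csm(V\smallsetminus D_2)$ by $c(TV)\cap\csm(V\smallsetminus D)$, and use $\csm(V)\cdot\alpha=(c(TV)\cap[V])\cdot\alpha=c(TV)\cap\alpha$ to absorb the remaining products. Your identification of the cap/intersection compatibility as the one nontrivial point is also accurate.

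However, the verification as written contains a sign error that breaks the final step. From $\gamma=\csm(V)-\csm(D_1)-\csm(D_2)+\csm(D_1\cap D_2)$ and $\csm(D_i)=\csm(V)-\gamma_i$ one gets
\[
\csm(D_1\cap D_2)=\gamma+\csm(V)-\gamma_1-\gamma_2\quad,
\]
not $\gamma-\csm(V)+\gamma_1+\gamma_2$ as you state: you substituted $\gamma_i$ where $\csm(D_i)$ was required. With your expression, after cancelling $\gamma_1\cdot\gamma_2$ the left-hand side reduces to $c(TV)\cap(\csm(V)-\gamma_1-\gamma_2)$ while the right-hand side reduces to $c(TV)\cap(\gamma_1+\gamma_2-\csm(V))$; these are negatives of one another, and their difference $2\,c(TV)\cap(\csm(V)-\gamma_1-\gamma_2)=2\,c(TV)\cap(\csm(D_1)+\csm(D_2)-\csm(V))$ does not vanish in general, so ``the remaining terms match'' is false as stated. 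With the corrected expression for $\csm(D_1\cap D_2)$ the two sides do agree, and your computation becomes precisely the chain of equalities in the paper's proof. So the gap is a local bookkeeping slip rather than a conceptual one, but it must be fixed for the argument to close.
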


\begin{proof}
With $D=D_1\cup D_2$ as in Corollary~\ref{cor:csmcomplement}, and noting that
$c(TV)\cap \alpha = c(V)\cdot \alpha$ for all $\alpha\in A_*V$,
\begin{align*}
\csm(D_1)\cdot \csm(D_2) &= (c(V)-\csm(V\smallsetminus D_1))\cdot
(c(V)-\csm(V\smallsetminus D_2)) \\
&=c(TV)\cap \left(c(V)-\csm(V\smallsetminus D_1)-\csm(V\smallsetminus D_2)\right)\\
&\qquad +\csm(V\smallsetminus D_1)\cdot \csm(V\smallsetminus D_2) \\
&=c(TV)\cap \left(\csm(D_1)+\csm(D_2)-c(V)\right)+c(TV)\cap \csm(V\smallsetminus D) \\
&=c(TV)\cap \left(\csm(D_1)+\csm(D_2)-\csm(D)\right) \\
&=c(TV)\cap \csm(D_1\cap D_2)
\end{align*}
where the last equality follows by inclusion-exclusion.
\end{proof}

\subsection{
Strongly splayed varieties and constructible functions}\label{sec:endfirstproof}
We say that two subvarieties $Z_1$, $Z_2$ of $V$ are {\em strongly\/} splayed if
$Z_1=D'_1\cap \cdots \cap D'_r$, $Z_2=D''_1\cap \cdots \cap D''_s$ where $D'_i$, $D''_j$
are hypersurfaces, and $\{D'_1,\dots,D'_r\}$, $\{D''_1,\dots,D''_s\}$ are splayed in
the sense of \S\ref{sec:splayed}. We do not know if splayed subvarieties are necessarily
strongly splayed; the distinction if of course immaterial for hypersurfaces.

We can also consider this notion for {\em constructible functions.\/} By definition, every
constructible function can be written as a linear combination of indicator functions of
closed subvarieties. Since every subvariety is an intersection of hypersurfaces, it follows
that every constructible function may be written as an integer linear 
combination of indicator functions of {\em hypersurfaces.\/}

\begin{defin}\label{def:splayedcf}
Two constructible functions $\varphi$, $\psi$ are {\em strongly splayed\/} if they admit
representations
\begin{equation}\label{eq:defss}
\varphi = \sum_i a'_i D'_i\quad,\quad \psi =\sum_j a''_j D''_j
\end{equation}
with $\{D'_1,\dots,D'_r\}$, $\{D''_1,\dots,D''_s\}$ splayed sets of hypersurfaces
and $a_i', a_i''\in \Zbb$.
\qede
\end{defin}

Thus, if $Z_1$ and $Z_2$ are strongly splayed, then so are the corresponding
indicator functions $\one_{Z_1}, \one_{Z_2}$.

\begin{thmII}
Let $\varphi$, $\psi$ be strongly splayed constructible functions on a nonsingular 
variety~$V$. Then
\[
\csm(\varphi) \cdot \csm(\psi) = c(TV) \cap \csm(\varphi\cdot \psi)\quad.
\]
\end{thmII}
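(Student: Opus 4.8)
The plan is to reduce Theorem~II to the hypersurface case already established in Theorem~\ref{thm:hypcase}, exploiting the fact that the $\csm$ transformation is \emph{linear} on constructible functions. By Definition~\ref{def:splayedcf}, strong splayedness of $\varphi$ and $\psi$ furnishes representations as in~\eqref{eq:defss},
\[
\varphi = \sum_i a'_i \one_{D'_i}\quad,\quad \psi = \sum_j a''_j \one_{D''_j}\quad,
\]
with $a'_i, a''_j \in \Zbb$ and $\{D'_1,\dots,D'_r\}$, $\{D''_1,\dots,D''_s\}$ splayed sets of hypersurfaces (here I identify each hypersurface with its indicator function, as in the definition). Since $\csm$ is a group homomorphism on constructible functions, I would first expand
\[
\csm(\varphi) = \sum_i a'_i \csm(D'_i)\quad,\quad \csm(\psi) = \sum_j a''_j \csm(D''_j)\quad.
\]

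Next I would observe that multiplication of constructible functions is compatible with this bookkeeping: since $\one_{D'_i}\cdot \one_{D''_j} = \one_{D'_i\cap D''_j}$, bilinearity gives
\[
\varphi\cdot \psi = \sum_{i,j} a'_i a''_j \one_{D'_i\cap D''_j}\quad,
\]
and hence, again by linearity of $\csm$,
\[
\csm(\varphi\cdot \psi) = \sum_{i,j} a'_i a''_j \csm(D'_i\cap D''_j)\quad.
\]
The crucial point is that each individual pair $(D'_i, D''_j)$ is splayed in the sense of \S\ref{sec:splayed}: at each point of $V$ the local analytic splitting witnessing splayedness of the two \emph{sets} places every $D'_i$ in the first factor and every $D''_j$ in the second, so in particular the single hypersurfaces $D'_i$ and $D''_j$ are defined in complementary sets of variables. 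Theorem~\ref{thm:hypcase} therefore applies to each pair, yielding $\csm(D'_i)\cdot \csm(D''_j) = c(TV)\cap \csm(D'_i\cap D''_j)$.

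It then remains to reassemble. Using bilinearity of the intersection product in $V$,
\[
\csm(\varphi)\cdot \csm(\psi) = \sum_{i,j} a'_i a''_j\, \csm(D'_i)\cdot \csm(D''_j) = \sum_{i,j} a'_i a''_j\, c(TV)\cap \csm(D'_i\cap D''_j)\quad,
\]
and pulling the cap product with $c(TV)$ out of the finite integer sum identifies the right-hand side with $c(TV)\cap \csm(\varphi\cdot \psi)$, as computed above. I do not expect a genuine obstacle here: all of the analytic and intersection-theoretic content has been absorbed into Theorem~\ref{thm:hypcase}, and what remains is purely formal. The two points meriting explicit mention are the verification that each pair drawn from the two splayed sets is itself splayed (immediate from the definition of splayed sets) and the identity $\one_{D'_i}\cdot\one_{D''_j}=\one_{D'_i\cap D''_j}$. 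Finally, although the representations~\eqref{eq:defss} are far from unique, the resulting identity is stated intrinsically in terms of $\varphi$, $\psi$, and $\varphi\cdot\psi$, so independence of the chosen representation is automatic and requires no separate argument.
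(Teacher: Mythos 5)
Your proof is correct and takes essentially the same route as the paper: both reduce Theorem~II to the hypersurface case (Theorem~\ref{thm:hypcase}) using the linearity of $\csm$, the bilinearity of the product of constructible functions, and the observation that each pair $(D'_i,D''_j)$ drawn from the two splayed sets is itself splayed. The only difference is organizational --- you expand the double sum directly, whereas the paper packages the same expansion as an induction on the pair $(r,s)$ in lexicographic order.
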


\begin{proof}
We will prove this statement by induction on the number of splayed hypersurfaces
needed to define $\varphi$, $\psi$ (as in~\eqref{eq:defss}).
More precisely, assume that the equality
\[
\csm(\varphi) \cdot \csm(\psi) = c(TV) \cap \csm(\varphi\cdot \psi)\quad.
\]
is known whenever $\varphi = \sum_{i=1}^r a'_i D'_i, \psi =\sum_{j=1}^s a''_j D''_j$
for a given pair $(r,s)$ of positive integers, with $\{D'_1,\dots,D'_r\}$ and $\{D''_1,\dots,D''_s\}$
splayed, and for all pairs preceding $(r,s)$ in the lexicographic order. 
We will show that the equality is then also true
for $(r+1,s)$. Since the statement is true for $(r,s)=(1,1)$ by Theorem~\ref{thm:hypcase}
(and $\cdot$ is symmetric), the general case follows by induction. Thus we are reduced to
showing that
\[
\csm(a\one_D+\varphi) \cdot \csm(\psi) = c(TV) \cap \csm((\one_D+\varphi)\cdot \psi)
\]
with $\varphi$ and $\psi$ as above, under the assumption that $\{D,D'_1,\dots,D'_r\}$ and 
$\{D''_1,\dots,D''_s\}$ are splayed. Since $\csm$ is linear,
\begin{align*}
\csm(a\one_D+\varphi) \cdot \csm(\psi) &= a\,\csm(\one_D)\cdot \csm(\psi)
+\csm(\varphi) \cdot \csm(\psi) \\
&= a\,c(TV)\cap \csm(\one_D\cdot \psi)+c(TV)\cap \csm(\varphi\cdot \psi) \\
\intertext{by the induction hypothesis}
&= c(TV)\cap (a\,\csm(\one_D\cdot \psi)+\csm(\varphi\cdot \psi)) \\
&= c(TV) \cap \csm((a\one_D+\varphi)\cdot \psi)
\end{align*}
as needed.
\end{proof}

Theorem~II implies the full statement of Theorem~I from the introduction. Indeed,
for $\varphi=\one_X$, $\psi=\one_Y$, under the assumption that $X$ and $Y$
(and hence $\varphi$, $\psi$) are strongly splayed, Theorem~II gives
\[
\csm(\one_X)\cdot \csm(\one_Y) = c(TV)\cap \csm(\one_X\cdot \one_Y)\quad,
\]
which gives~\eqref{eq:splayedthmI} as $\one_X\cdot \one_Y = \one_{X\cap Y}$.


\section{Proof of Theorem~III}\label{sec:cfu}

If $X$ is a subscheme of a nonsingular variety, the {\em Chern-Fulton\/} class of $X$, 
$\cFu(X)$, is defined by
\[
\cFu(X):= c(TV)\cap s(X,V)\quad.
\]
W.~Fulton introduced this class in~\cite{85k:14004}, Example~4.2.6 (a), and proved that
it is in fact independent of the choice of the ambient nonsingular variety $V$. If $X$ is
itself nonsingular, then $s(X,V)=c(N_XV)^{-1}\cap [X]$ (\S\ref{sec:Segreclassformula}),
so that $\cFu(X)=c(X)=\csm(X)$ in this case. The classes $\csm(X)$ and $\cFu(X)$ differ
in general; for example, $\cFu(X)$ is sensitive to the scheme structure of $X$, while
$\csm(X)$ only depends on the support of~$X$.

Theorem~III is a straightforward consequence of the following multiplicative formula
for Segre classes of splayed subschemes.

\begin{lemma}\label{lem:multsegre}
Let $Z_1$, $Z_2$ be splayed subschemes of a nonsingular variety $V$. Then
\begin{equation}\label{eq:multsegre}
s(Z_1\cap Z_2,V) = s(Z_1,V)\cdot s(Z_2,V)
\end{equation}
in $A_*(Z_1\cap Z_2)$.\end{lemma}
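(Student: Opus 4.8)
Proof proposal for Lemma 2.16 (the multiplicative Segre class formula).

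The plan is to mirror the blow-up argument used for Theorem~\ref{thm:Segreclassrel}, but in the much simpler situation where no auxiliary hypersurfaces are present, so that the center of the ``last'' blow-up is already a complete intersection and need not be blown up at all. Concretely, I would work over the blow-up tower $\tilde\pi_2: \TV_{12}\to\TV_1$, $\pi_1: \TV_1\to V$ of \S\ref{sec:splayed}, with $E_1$ the exceptional divisor of $\pi_1$ and $E'_2$ that of $\tilde\pi_2$, and set $\pi=\pi_1\circ\tilde\pi_2$.

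First I would identify the inverse image of $Z_1\cap Z_2$ on $\TV_{12}$. Since $\cI_{Z_1\cap Z_2,V}=\cI_1+\cI_2$, its pullback to $\TV_{12}$ is $\cO(-E_1)+\cO(-E'_2)$, so scheme-theoretically $\pi^{-1}(Z_1\cap Z_2)=E_1\cap E'_2$. The crucial point, where splayedness enters, is that this intersection is \emph{regular} of codimension~$2$: this may be checked locally analytically, where Corollary~\ref{splayedbu} identifies $\TV_{12}$ with a product $B\ell_{Z'_1}V'\times B\ell_{Z'_2}V''$ in which $E_1$ and $E'_2$ are pulled back from the two distinct factors and hence meet transversally. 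Consequently $E_1\cap E'_2$ is regularly embedded with normal bundle $(\cO(E_1)\oplus\cO(E'_2))|_{E_1\cap E'_2}$, and the third property of Segre classes recalled in \S\ref{sec:Segreclassformula} gives
\[
s(E_1\cap E'_2,\TV_{12})=\frac{E_1\cdot E'_2}{(1+E_1)(1+E'_2)}\cap[\TV_{12}].
\]
By birational invariance of Segre classes, $s(Z_1\cap Z_2,V)=\pi_*\,s(E_1\cap E'_2,\TV_{12})$.

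It then remains to push this class down through the tower and recognize the product $s(Z_1,V)\cdot s(Z_2,V)$. Writing $E_1=\tilde\pi_2^*E_1$ and applying the projection formula, the pushforward by $\tilde\pi_2$ produces
\[
\frac{E_1}{1+E_1}\cap\tilde\pi_{2*}\left(\frac{[E'_2]}{1+E'_2}\right)
=\frac{E_1}{1+E_1}\cap s(\pi_1^{-1}(Z_2),\TV_1),
\]
using birational invariance again for the exceptional divisor $E'_2$. At this step Lemma~\ref{pullback} is exactly what is needed: the splayedness hypothesis lets me replace $s(\pi_1^{-1}(Z_2),\TV_1)$ by $\pi_1^*s(Z_2,V)$. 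Finally, pushing forward by $\pi_1$ and invoking the projection formula for the intersection product on the smooth varieties $\TV_1$ and $V$, together with $\pi_{1*}\bigl(\tfrac{[E_1]}{1+E_1}\bigr)=s(Z_1,V)$, turns the class into $s(Z_1,V)\cdot s(Z_2,V)$.

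The main obstacle is the geometric input in the first step: establishing that, under splayedness, the two exceptional divisors $E_1$ and $E'_2$ meet transversally in $\TV_{12}$, so that $\pi^{-1}(Z_1\cap Z_2)$ is the regularly embedded complete intersection $E_1\cap E'_2$ rather than a subscheme with a fatter structure. This is precisely where the product splitting of Corollary~\ref{splayedbu} (equivalently, the irreducibility of the fiber product underlying Corollary~\ref{fiberint}) is indispensable, and it plays here the role that the ``remarkable cancellation'' plays in the proof of Theorem~\ref{thm:Segreclassrel}. Once it is in place, the remaining steps are formal manipulations with the projection formula and Lemma~\ref{pullback}; I would finally observe that every pushforward and product used respects supports, so that the identity holds in the refined group $A_*(Z_1\cap Z_2)$ as stated.
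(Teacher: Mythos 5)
Your proposal is correct and follows essentially the same route as the paper's proof: identify $\pi^{-1}(Z_1\cap Z_2)$ as the codimension-$2$ complete intersection of the two exceptional divisors on $\TV_{12}$ (this is where splayedness enters), compute its Segre class from the normal bundle, and push down using the projection formula together with the fiber-square identity $\tilde\pi_{2*}\tilde\pi_1^*=\pi_1^*\pi_{2*}$ --- your detour through Lemma~\ref{pullback} is the same computation, since that lemma is itself proved from Corollary~\ref{fiberint} in exactly this way.
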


\begin{proof}
Consider again the fiber square of blow-ups
\[
\xymatrix{
\TV_{12} \ar[d]_{\tilde \pi_2} \ar[r]^{\tilde \pi_1} & \TV_2 \ar[d]^{\pi_2} \\
\TV_1 \ar[r]_{\pi_1} & V
}
\]
as in \S\ref{sec:splayed}. Let $E_1$ be the exceptional divisor in $\TV_1$, $E_2$
the divisor in $\TV_2$. By splayedness, the inverse images $\tilde \pi_2^{-1}(E_1)$
and $\tilde \pi_1^{-1}(E_2)$ have no components in common. 
Thus the inverse image
of $Z_1\cap Z_2$ in $\TV_{12}$ is the complete intersection of $\tilde \pi_2^{-1}(E_1)$
and $\tilde \pi_1^{-1}(E_2)$. 
Therefore
\begin{align*}
s(Z_1\cap Z_2,V) &= (\pi_1\circ\tilde\pi_2)_* \frac{\tilde \pi_2^*(E_1) \cdot \tilde \pi_1^*(E_2)}
{(1+\tilde \pi_2^*(E_1))(1+\tilde \pi_1^*(E_2))} \\
&=\pi_{1*}\left(\frac{E_1}{1+E_1}\cdot \tilde\pi_{2*} \tilde\pi_1^* \frac{E_2}{1+E_2}\right)\\
\intertext{by the projection formula}
&=\pi_{1*}\left(\frac{E_1}{1+E_1}\cdot \pi_1^* \pi_{2*} \frac{E_2}{1+E_2}\right)\\
\intertext{since the diagram is a fiber square}
&=\pi_{1*}\left(\frac{E_1}{1+E_1}\cdot \pi_1^* s(Z_2,V)\right)\\
&=s(Z_1,V)\cdot s(Z_2,V)
\end{align*}
again by the projection formula.
\end{proof}

\begin{remark}
Formula~\eqref{eq:multsegre} also follows formally by setting $D_1=D_2=0$ in 
\eqref{eq:Segreclassrel}; note that if $\cI_{D_1,V}$ and $\cI_{D_1,V}$ are trivial, then the 
the scheme $W$ appearing in Theorem~\ref{thm:Segreclassrel} equals $Z_1\cap Z_2$.
However, then the proof given for Theorem~\ref{thm:Segreclassrel} does then not work: 
one has to assume that $D_1$ 
and $D_2$ are hypersurfaces containing $Z_1$, $Z_2$ respectively, and this is in general
incompatible with assuming that their classes vanish.
Also, \eqref{eq:Segreclassrel} only holds in $A_*V$,
while~Lemma~\ref{lem:multsegre} proves~\eqref{eq:multsegre} in $A_*(Z_1\cap Z_2)$.
\qede\end{remark}

Theorem~III, stated in the introduction, follows immediately from Lemma~\ref{lem:multsegre}:
assuming $X$ and $Y$ are splayed, 
\begin{align*}
\cFu(X) \cdot \cFu(Y) &= (c(TV)\cap s(X,V))\cdot (c(TV)\cap s(Y,V)) \\ 
&= c(TV)\cap (c(TV)\cap (s(X,V)\cdot s(Y,V))) \\
&= c(TV)\cap (c(TV)\cap s(X\cap Y,V)) \\
&= c(TV)\cap \cFu(X\cap Y)\quad.
\end{align*}

\begin{example}
With $X$ and $Y$ as in Example~\ref{intro:example}, we have
\begin{align*}
\cFu(X) &= [\Pbb^4] + 6[\Pbb^3]+11 [\Pbb^2]+12 [\Pbb^1]+ 3[\Pbb^0] \\
\cFu(Y) &= 2[\Pbb^4] + 8[\Pbb^3]+14 [\Pbb^2]+12 [\Pbb^1]+ 6[\Pbb^0] 
\end{align*}
(obtained using the code from~\cite{MR1956868}). According to Theorem~III,
\[
c(T\Pbb^5)\cap\cFu(X\cap Y) = \cFu(X)\cdot \cFu(Y)= 2[\Pbb^3]+20 [\Pbb^2]
+84 [\Pbb^1]+208[\Pbb^0]
\]
from which
\[
\csm(X\cap Y) =  2[\Pbb^3]+8 [\Pbb^2]+6 [\Pbb^1]+ 12[\Pbb^0]\quad.
\]
Again, this can be verified in this example by using the code in~\cite{MR1956868}.
\qede\end{example}


\section{Proof of Theorem~IV}

We now assume that $X$ is a general section of a very ample line bundle on $V$;
in particular, $X$ is itself nonsingular. If a `Bertini theorem for splayedness' held,
then one would expect that for any $Y\subseteq V$, the formulas established
in Theorem~I and Theorem~III would hold. We prove these formulas independently
of such Bertini statements (and without invoking splayedness);
 as we pointed out in the introduction, a simple-minded `splayed Bertini'
statement in fact does {\em not\/} hold (Example~\ref{ex:noBertini}).

Our main tool is again a formula for Segre classes, which we reproduce here for
the convenience of the reader.

\begin{lemma}\label{lem:segrebert}
Let $Z\subseteq W$ be schemes, and let $D$ be a Cartier divisor in $W$, meeting 
properly the support of every component of the normal cone of $Z$ in $W$. Then
\begin{equation}\label{eq:segrebert}
s(D\cap Z,D)=D\cdot s(Z,W)\quad.
\end{equation}
\end{lemma}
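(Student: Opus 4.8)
The plan is to reduce the statement to the defining property of Segre classes via a blow-up, exploiting the hypothesis that $D$ meets the normal cone of $Z$ in $W$ properly. First I would blow up $W$ along $Z$ to obtain $\pi\colon \Til W\to W$ with exceptional divisor $E=\pi^{-1}(Z)$; by definition the Segre class is $s(Z,W)=\pi_*\bigl([E]/(1+E)\bigr)$, and more precisely $s(Z,W)=\pi_*(c(\cO(E))^{-1}\cap[E])$, recording $E$ as the projectivized normal cone of $Z$ in $W$. The analogous blow-up of $D$ along $D\cap Z$ should fit compatibly over $\pi$, so the strategy is to compare the exceptional divisor of $B\ell_{D\cap Z}D$ with the restriction $E|_{\pi^{-1}(D)}$.

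The key geometric input is that the proper-meeting hypothesis guarantees that the pullback $\pi^*D=\pi^{-1}(D)$ (as a Cartier divisor on $\Til W$) meets $E$ properly, so that $\pi^*D$ does not contain any component of $E$ and the intersection $\pi^*D\cdot E$ is a genuine divisor on $E$ representing the blow-up of $D$ along $D\cap Z$. Concretely, I would argue that $\pi$ restricts to the blow-up $\pi'\colon \Til D\to D$ of $D$ along $D\cap Z$, with exceptional divisor $E'$ identified with $\pi^*D\cap E$ (this is where properness is essential: it ensures the scheme-theoretic restriction of the normal cone computes the normal cone of $D\cap Z$ in $D$, with no spurious components). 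Then
\[
s(D\cap Z,D)=\pi'_*\left(\frac{[E']}{1+E'}\right)
=\pi'_*\left(\frac{\pi^*D\cdot [E]}{1+E}\right)\quad,
\]
using that $E'=\pi^*D|_E$ and that the self-intersection formula lets me rewrite the Segre class of $E'$ inside $E$.

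The remaining step is a projection-formula computation. Writing everything on $\Til W$ and pushing forward by $\pi$, I would compute
\[
\pi'_*\left(\frac{\pi^*D\cdot[E]}{1+E}\right)
=\pi_*\left(\pi^*D\cdot \frac{[E]}{1+E}\right)
=D\cdot \pi_*\left(\frac{[E]}{1+E}\right)
=D\cdot s(Z,W)\quad,
\]
where the middle equality is the projection formula applied to the proper morphism $\pi$ and the Cartier divisor class $D$ pulled back from $W$. I expect the main obstacle to be the careful bookkeeping in the second step: verifying rigorously that the restriction of the blow-up of $W$ along $Z$ to the divisor $\pi^{-1}(D)$ really is the blow-up of $D$ along $D\cap Z$, and that under the proper-intersection hypothesis the exceptional divisor $E'$ equals $\pi^*D\cap E$ with the correct scheme structure and no extraneous components. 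This is precisely the point where the condition on the normal cone of $Z$ in $W$ is used, and where the standard compatibility of Segre classes with proper intersection (as in Fulton's treatment) must be invoked with care.
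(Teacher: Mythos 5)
Your proposal is correct and follows essentially the same route as the paper: the proper-meeting hypothesis identifies the blow-up of $D$ along $D\cap Z$ with the inverse image of $D$ in the blow-up of $W$ along $Z$, and the projection formula then yields $s(D\cap Z,D)=D\cdot s(Z,W)$. The paper states this in two sentences; your write-up simply makes the identification of the exceptional divisor $E'$ with $\pi^*D\cap E$ and the push-forward computation explicit.
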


\begin{proof}
Under the hypothesis of this statement, the blow-up of $D$ along $D\cap Z$
is the inverse image of $D$ in the blow-up of $W$ along $Z$. The statement
follows then from the projection formula.
\end{proof}

The formula for the Chern-Fulton class in Theorem~IV follows easily. Indeed,
if $X$ is general and very ample, then it can be chosen to intersect properly 
the components of the normal cone of $Y$; further, $X$ is nonsingular, so 
applying~\eqref{eq:segrebert} and the definition of Chern-Fulton class,
\[
\cFu(X\cap Y)=c(TX)\cap s(X\cap Y,X)=c(TX)\cap (X\cdot s(Y,V))=c(X)\cdot s(Y,V)\quad.
\]
Thus
\[
c(TV)\cap \cFu(X\cap Y)=c(X)\cdot c(TV)\cap s(Y,V) = c(X)\cdot \cFu(Y)\quad,
\]
as stated in Theorem~IV.

For the proof of the corresponding statement about $\csm$ classes, after applying
a Veronese embedding we may assume that $V\subseteq \Pbb^n$ and $X$ is a
general hyperplane section. In this situation,
\begin{equation}\label{lem:hypsec}
\csm(X\cap Y) = \frac X{1+X} \cdot \csm(Y)\quad.
\end{equation}
This follows from Proposition~2.6 in~\cite{MR3031565}. (The proof of this proposition
may be summarized as follows: by inclusion-exclusion
it can be reduced to the case in which $Y$ is a hypersurface; using 
Lemma~\ref{lem:csmfromsegre}, the formula amounts then to a relation for Segre
classes that ultimately depends again on Lemma~\ref{lem:segrebert}.)
From~\eqref{lem:hypsec},
\[
c(TV)\cap \csm(X\cap Y) = \left(c(TV)\cap \frac X{1+X}\right)\cdot \csm(Y)
=c(X)\cdot \csm(Y)\quad,
\]
completing the proof of Theorem~IV.

\newpage



\begin{thebibliography}{Sch65b}

\bibitem[AF13]{MR3106736}
Paolo Aluffi and Eleonore Faber.
\newblock Splayed divisors and their {C}hern classes.
\newblock {\em J. Lond. Math. Soc. (2)}, 88(2):563--579, 2013.

\bibitem[Alu94]{MR96d:14004}
Paolo Aluffi.
\newblock Mac{P}herson's and {F}ulton's {C}hern classes of hypersurfaces.
\newblock {\em Internat. Math. Res. Notices}, (11):455--465, 1994.

\bibitem[Alu99]{MR2001i:14009}
Paolo Aluffi.
\newblock Chern classes for singular hypersurfaces.
\newblock {\em Trans. Amer. Math. Soc.}, 351(10):3989--4026, 1999.

\bibitem[Alu03]{MR1956868}
Paolo Aluffi.
\newblock Computing characteristic classes of projective schemes.
\newblock {\em J. Symbolic Comput.}, 35(1):3--19, 2003.
\newblock Code available at http://www.math.fsu.edu/\char126
  aluffi/CSM/CSM.html.

\bibitem[Alu13]{MR3031565}
Paolo Aluffi.
\newblock Euler characteristics of general linear sections and polynomial
  {C}hern classes.
\newblock {\em Rend. Circ. Mat. Palermo (2)}, 62(1):3--26, 2013.

\bibitem[EH00]{MR1730819}
David Eisenbud and Joe Harris.
\newblock {\em The geometry of schemes}, volume 197 of {\em Graduate Texts in
  Mathematics}.
\newblock Springer-Verlag, New York, 2000.

\bibitem[Fab]{Faber12}
E.~Faber.
\newblock Towards transversality of singular varieties: splayed divisors.
\newblock arXiv:1201.2186, to appear in Publ.~RIMS.

\bibitem[Ful84]{85k:14004}
William Fulton.
\newblock {\em Intersection theory}.
\newblock Springer-Verlag, Berlin, 1984.

\bibitem[Ken90]{MR1063344}
Gary Kennedy.
\newblock Mac{P}herson's {C}hern classes of singular algebraic varieties.
\newblock {\em Comm. Algebra}, 18(9):2821--2839, 1990.

\bibitem[Kwi94]{kwiecinski1994transforme}
Micha{\l} Kwieci\'nski.
\newblock {Sur le transform{\'e} de Nash et la construction du graphe de
  MacPherson}.
\newblock {\em Th\`ese, Universit{\'e} de Provence}, 1994.

\bibitem[Li09]{MR2595553}
Li~Li.
\newblock Wonderful compactification of an arrangement of subvarieties.
\newblock {\em Michigan Math. J.}, 58(2):535--563, 2009.

\bibitem[Mac74]{MR0361141}
R.~D. MacPherson.
\newblock Chern classes for singular algebraic varieties.
\newblock {\em Ann. of Math. (2)}, 100:423--432, 1974.

\bibitem[Sch]{Schup}
J\"org Sch\"urmann.
\newblock {A generalized Verdier-type Riemann-Roch theorem for
  Chern-Schwartz-MacPherson classes}.
\newblock arXiv:math/0202175.

\bibitem[Sch65a]{MR35:3707}
Marie-H{\'e}l{\`e}ne Schwartz.
\newblock Classes caract\'eristiques d\'efinies par une stratification d'une
  vari\'et\'e analytique complexe. {I}.
\newblock {\em C. R. Acad. Sci. Paris}, 260:3262--3264, 1965.

\bibitem[Sch65b]{MR32:1727}
Marie-H{\'e}l{\`e}ne Schwartz.
\newblock Classes caract\'eristiques d\'efinies par une stratification d'une
  vari\'et\'e analytique complexe. {II}.
\newblock {\em C. R. Acad. Sci. Paris}, 260:3535--3537, 1965.

\end{thebibliography}
\end{document}